\newtheorem{thm}{Theorem}[section]
\newtheorem{cor}[thm]{Corollary}
\newtheorem{prop}[thm]{Proposition}
\newtheorem{lemma}[thm]{Lemma}
\theoremstyle{definition}
\newtheorem{Def}[thm]{Definition}
\newtheorem{remark}[thm]{Remark}
\newtheorem{notation}[thm]{Notation}
\theoremstyle{remark}
\newtheorem{claim}{Claim}
\newcommand{\T}{{\mathbb T}}
\newcommand{\N}{{\mathbb N}}
\newcommand{\Z}{{\mathbb Z}}
\newcommand{\B}{{\mathscr B}}
\newcommand{\E}{{\mathscr E}}
\newcommand{\TT}{{\mathfrak T}}
\newcommand{\cont}{\mathfrak{c}}
\newcommand\n{\mathfrak{n}}
\newcommand{\Zar}{{\mathfrak Z}}
\newcommand{\du}[1]{\widehat{#1}}
\newcommand{\round}[1]{almost ${#1}$-torsion}
\newcommand{\Round}[1]{Almost ${#1}$-torsion}
\newcommand\CL{\mathrm{Cl}}
\newcommand{\Mono}{{\mathrm{Mono}}}
\newcommand{\Hom}{{\mathrm{Hom}}}
\newcommand{\restr}[1]{\restriction_{{#1}}}
\begin{document}

\title{A Kronecker-Weyl theorem for subsets of abelian groups}

\author{Dikran Dikranjan}
\address[D. Dikranjan]{Dipartimento di Matematica e Informatica\\
Universit\`{a} di Udine\\
Via delle Scienze  206, 33100 Udine\\
Italy}
\email{dikran.dikranjan@uniud.it}

\author{Dmitri Shakhmatov}
\address[D. Shakhmatov]{Division of Mathematics, Physics and Earth Sciences\\
Graduate School of Science and Engineering\\
Ehime University\\
Matsuyama 790-8577\\
Japan}
\email{dmitri.shakhmatov@ehime-u.ac.jp}

\thanks{The first named author was partially supported by SRA, grants P1-0292-0101 and J1-9643-0101.}

\thanks{The second named author was partially supported by the Grant-in-Aid for Scientific Research (C) No.~22540089 by the Japan Society for the Promotion of Science (JSPS)}

\keywords{uniform distribution, Weyl criterion, Zariski closure, Zariski topology, potentially dense set, Markov problem, diophantine approximation, discrete flow, Bohr topology, Bohr compactification}

\subjclass[2000]{Primary: 20K30; Secondary: 03E15, 11K36, 11K60, 22A05, 37B05, 54D65, 54E52}

\begin{abstract}
Let $\N$ be the set of non-negative integer numbers, $\T$ the circle group and $\cont$ the cardinality of the continuum. Given an abelian group $G$  of size at most $2^\cont$ and a countable family $\E$ of  infinite subsets of $G$,  we construct ``Baire many'' monomorphisms $\pi: G\to \T^\cont$ such that $\pi(E)$ is dense in $\{y\in \T^\cont: ny=0\}$ whenever  $n\in\N$, $E\in \E$, $nE=\{0\}$  and $\{x\in E: mx=g\}$ is finite for all $g\in G$ and $m$ such that $n=mk$
for some $k\in\N\setminus\{1\}$. We apply this result to obtain an algebraic description of countable potentially dense subsets of abelian groups, thereby making a significant progress towards a solution  of a problem of Markov going back to 1944. A particular case of our result yields  a positive answer to a problem of Tkachenko and Yaschenko \cite[Problem 6.5]{TY}. Applications to group actions and discrete flows on $\T^\cont$, diophantine approximation,  Bohr topologies and Bohr compactifications are also provided.
\end{abstract}

\maketitle

\setlength{\baselineskip}{14pt}

We refer the reader to \cite{Fu} for a background on abelian groups.
All undefined topological terms can be found in \cite{Eng}. 

We use $\N$ and $\N^+$ to denote the set of all natural numbers and positive natural numbers, respectively. The groups of integer numbers and real numbers 
are denoted by $\Z$ and $\mathbb{R}$, respectively.
We use $\mathbb{T}=\mathbb{R}/\Z$ to denote the circle group (written additively).
As usual, 
the symbol $|X|$ stands for the cardinality of a set $X$, and
we let
$\omega=|\N|$ and $\cont=|\mathbb{R}|$. 

Let $G$ be an abelian group. For every $m\in\N$, define $mG=\{mx:x\in G\}$ and $G[m]=\{x\in G:mx=0\}$. 
We say that $G$ is {\em bounded\/} if $G=G[n]$ for some $n\in\N^+$, and the 
minimal such $n$ is called the {\em exponent\/} of $G$. If $nG=G$ for every $n\in\N^+$, then $G$ is said to be {\it divisible\/}.  We denote by $r_0(G)$ the free rank of the group $G$ and by  $r_p(G)$ the $p$-rank of $G$ for a prime number $p$.  For 
a compact Hausdorff abelian group $K$, we use $\Hom(G,K)$ to denote the set of all group homomorphisms from $G$ to $K$ equipped with the topology of pointwise convergence, i.e., with the subspace topology that $\Hom(G,K)$ inherits from the Tychonoff product topology on $K^G$. Since $\Hom(G,K)$ is closed in the compact space $K^G$,  $\Hom(G,K)$ 
is a compact Hausdorff (abelian) group, with pointwise addition of homomorphisms as the group operation.   The set of all monomorphisms from $G$ to $K$ is denoted by $\Mono(G,K)$. Recall that $\du{G}=\Hom(G,\T)$ is the Pontryagin dual of  (the discrete abelian group)  $G$.

A topological group is called {\em precompact\/} (or {\em totally bounded\/}) if its completion is compact \cite{Weil}.

\section{Introduction}

Let $\mathbb{C}$ denote the complex plane and  $\mathbb{S}=\{z\in \mathbb{C}: |z|=1\}$. 
For an abelian group $K$, a map $\chi:K \to \mathbb{S}$ will be called an {\em $\mathbb{S}$-character\/} of $K$ provided that $\chi(0)=1$ and $\chi(x+y)=\chi(x) \cdot \chi(y)$ whenever $x,y\in K$. 

Let  $K$ be a compact 
abelian group and let $\mu$ be its Haar measure. A one-to-one sequence $\{x_n:n\in\N\}$ in $K$ is called {\em uniformly distributed\/} 
provided that
\begin{equation}
\label{eq:intr:1}
\lim_{n\to\infty} \frac{1}{n}\sum_{j=1}^{n}f(x_j)=\int_K f\ d \mu
\ \ 
\mbox{ for every continuous function }
f:K\to \mathbb{C}.
\end{equation}
If $\chi:K\to \mathbb{C}$ is a non-trivial
continuous $\mathbb{S}$-character,
then 
$\displaystyle\int_K \chi\ d \mu=0$, and so the following criterion due to Weyl
says that it suffices to take as $f$ in (\ref{eq:intr:1}) only non-trivial
continuous $\mathbb{S}$-characters:
A one-to-one sequence $\{x_n:n\in\N\}$ in 
$K$ is uniformly distributed if and only if 
$
\lim_{n\to\infty} \frac{1}{n}\sum_{j=1}^{n}\chi(x_j)=0 $
 for every non-trivial continuous $\mathbb{S}$-character $\chi$ of $K$. 
Since the only continuous $\mathbb{S}$-characters of $\T$ are 
of the form 
$x\mapsto e^{2\pi i k x}$ 
for some $k\in\Z$,
for $K=\T$ the Weyl criterion becomes:
A sequence $\{x_n:n\in\N\}$ 
in $\T$ is uniformly distributed if and only if 
$
\lim_{n\to\infty} \frac{1}{n} \sum_{j=1}^n  e^{2\pi i k x_j}=0$ 
for every $k\in \Z$.

For a strictly increasing sequence $S=\{a_n:n\in\N\}$ of integers,  let
$$
\mathrm{Weyl}({S,K})=
\{x\in K: \mbox{ the sequence } \{a_nx:n\in\N\}= Sx \mbox{ is uniformly distributed  in } K\}.
$$
The classical setting of Kronecker-Weyl's theorem  deals with the question of how large is the set $\mathrm{Weyl}({S,K})$. When $K=\T$ (or more generally, when $K=\T^n$ for some 
$n\in\N^+$), the classical result of Weyl says that $\mathrm{Weyl}({S,K})$ is a dense subset of $K$ of (Haar) measure 1. 

\begin{Def}
\label{definition:U:D}
Let $G$ be an abelian group and $K$ a compact abelian group.
\begin{itemize}
\item[(i)]
For a one-to-one sequence $S=\{a_n:n\in\N\}$  in $G$, define 
$$
\mathbb{U}({S,K})=\{h\in \Hom(G,K):h(S)=\{h(a_n):n\in\N\} \mbox{ is uniformly distributed in }K\}.
$$
\item[(ii)] For an infinite subset $S$ of an abelian group $G$, define
$$
\mathbb{D}({S,K})=\{h\in \Hom(G,K):\ h(S) \mbox{ is dense in }K\}.
$$
\end{itemize}
\end{Def}

When $G=\Z$, the map $\theta:\Hom(\Z,K)\to K$ defined by $\theta(h)=h(1)$ for every $h\in \Hom(\Z,K)$, is a  topological isomorphism between $\Hom(\Z,K)$ and $K$ such that  $\theta(\mathbb{U}({S,K}))=\mathrm{Weyl}({S,K})$ for every strictly  increasing sequence $S=\{a_n:n\in\N\}\subseteq \Z$. 
In particular, the sets $\mathbb{U}({S,K})$ and $\mathrm{Weyl}({S,K})$ have the same Borel complexity and are simultaneously dense in $\Hom(\Z,K)$ and $K$, respectively.
This observation allows us to identify the group $K$ with the homomorphism group $\Hom(\Z,K)$ and to focus our attention on the set $\mathbb{U}({S,K})$ instead of the set $\mathrm{Weyl}({S,K})$.

Since uniformly distributed sequences in $K$ are dense,  $\mathbb{U}({S,K})\subseteq \mathbb{D}({S,K})$ for every  one-to-one sequence  $S=\{a_n:n\in\N\}$ in $G$. In fact, a certain converse also holds: If $S$ is a countably infinite subset of $G$ such that $h(S)$ is dense in $K$,  then one can always find a one-to-one enumeration $S=\{a_n:n\in\N\}$ 
of the set $S$ such that the sequence $\{h(a_n):n\in\N\}$ becomes uniformly distributed in $K$.
In other words, {\em it is the density in $K$ that remains from a uniformly distributed sequence  in $K$ after forgetting its enumeration\/}. 
This allows us to consider $\mathbb{D}({S,K})$ as a  natural {\em topological\/} counterpart of the set $\mathbb{U}({S,K})$.

There are other reasons why we prefer the set $\mathbb{D}({S,K})$ to the set  $\mathbb{U}({S,K})$. Indeed, unlike the classical case of  the integers $\Z$, {\em a priori\/} there is no natural order on an arbitrary abelian group $G$ that allows us to index elements of its countably infinite subset $S$. 
The second reason is that the assignment $S \mapsto \mathbb{D}({S,K})$ is monotone (that is, $S\subseteq S'\subseteq G$ implies   $\mathbb{D}({S,K})\subseteq \mathbb{D}({S',K})$), while there is no natural way to make the assignment $S \mapsto \mathbb{U}({S,K})$ monotone.
At last but not least, 
$\mathbb{D}({S,K})$ has nicer descriptive properties than $\mathbb{U}({S,K})$;
 see Proposition \ref{G:delta-sets}.

To keep closer to the classical situation, we shall focus our attention on the case when $K$ is 
a power $\T^\kappa$ of the torus group $\T$.   Observe that $\Hom(G,\T)$ coincides with the Pontryagin dual group $\du{G}$ of $G$, and $\Hom(G,\T^\kappa)$ is naturally isomorphic to the group $\Hom(G,\T)^\kappa\cong \du{G}^\kappa$, for every cardinal $\kappa$. In the future we will always identify $\Hom(G,\T^\kappa)$ with $\du{G}^\kappa$.

Assume that $\tau$ and $\kappa$ are cardinals such that $\tau\le\kappa$, and let $q:\du{G}^\kappa\to\du{G}^\tau$ be the natural projection.
Clearly, $q(\mathbb{D}({S,\T^\kappa}))\subseteq \mathbb{D}({S,\T^\tau})$
for every subset $S$ of an abelian group $G$. In other words, the bigger the cardinal $\kappa$, the ``harder'' it is to send a given set $S$ by a homomorphism $h:G\to \T^\kappa$ into a dense subset $h(S)$ of $\T^\kappa$, and so the ``thinner'' is the set $\mathbb{D}({S,\T^\kappa})$. In fact, there is a natural limit:
If $S$ is countable, then $\mathbb{D}({S,\T^\kappa})\not=\emptyset$ implies that $\T^\kappa$ is separable, which yields $\kappa\le\cont$ by \cite[Theorem 2.3.15]{Eng}.
Therefore, the case $\kappa=\cont$ represents the strongest possible version of any positive result that ensures $\mathbb{D}({S,\T^\kappa})\not=\emptyset$. This explains why $\mathbb{D}({S,\T^\cont})$ appears in items (ii) and (iv) of Theorem \ref{main:corollary}, which is our principal result. This corollary characterizes subsets $S$ of abelian groups $G$ such that $\mathbb{D}({S,\T^\cont})\not=\emptyset$ and, moreover, it demonstrates that a mere non-emptiness of the set $\mathbb{D}({S,\T^\cont})$ automatically guarantees that this set is ``rather big'' in $\Hom(G,\T^\cont)=\du{G}^\cont$. The previous discussion allows us to view Theorem \ref{main:corollary} as an {\em extreme topological version\/} of the Kronecker-Weyl's theorem for arbitrary subsets of abelian groups.

The manuscript is organized as follows. In Section \ref{main:theorem} we collect basic properties of the family $\TT_n(G)$ of \round{n} subsets of an abelian group $G$ ($n\in\N$). All major results in this paper are corollaries of Theorem \ref{main:result} whose proof is postponed until Section \ref{proof:section}.
Section \ref{Kronecker-Weyl:section}
 contains straightforward corollaries of this general theorem. In particular, we show that an abelian group $G$ admits a dense monomorphism in $\T^\cont$ precisely when $|G|\leq 2^\cont$ and $G$ is not bounded  (Corollary \ref{dense:subgroups:T^c}). 
Applications to group actions (Corollary \ref{action}) and
discrete flows (Corollary \ref{flow}) on $\T^\cont$ 
are also given. In Section \ref{Zariski:section} we apply Theorem \ref{main:result} to the problem of existence of precompact group topologies on an abelian group $G$ realizing simultaneously the Zariski closure of countably many subsets of $G$  (Theorem \ref{realizing:Zariski:closure}).
In Section \ref{potentially:dense:section}
we make a significant contribution to a problem of Markov, going back to 1944, asking for an algebraic description of potentially dense sets in groups.
Corollary \ref{potentially:dense}
completely solves this problem for countable subsets of abelian groups,
while Corollary \ref{potentially:dense:subsets}
gives a complete 
description of potentially dense subsets of abelian groups of size at most $2^\cont$.
Even a particular case of this description (given in Corollary \ref{cor:TY}) solves a problem of Tkachenko and Yaschenko \cite[Problem 6.6]{TY}.
In Section \ref{Bohr:section} we apply our principal result to Bohr topologies and Bohr compactifications of abelian groups. In particular, we offer as a corollary an easy direct proof of classical results of Flor (Corollary \ref{Flor:result}) and Glicksberg (Corollary \ref{Glicksberg:result}). 
Sections \ref{dual:group:section} and \ref{Kurama:section} develop tools necessary for the proof of the main result (Theorem \ref{main:result}) that is carried out in  Section \ref{proof:section}. The last Section \ref{section:9} deals with Borel complexity of sets $\mathbb{D}({S,K})$ and $\mathbb{U}({S,K})$.

\section{Main theorem}
\label{main:theorem}

We say that $d\in\mathbb{N}$ is a {\em proper divisor of $n\in\mathbb{N}$\/} provided that $d\not\in\{0,n\}$ and $dm=n$ for some $m\in \mathbb{N}$. 
Note that, according to our definition, each $d\in\N^+$ is a proper divisor of $0$.

\begin{Def}\label{def:of:almost:n:torsion:sets} 
Let $G$ be an abelian group.
\begin{itemize}
\item[(i)] Let $n\in \mathbb{N}$. Following \cite{DS}, we say that  a countably infinite subset $S$ of an abelian group $G$ is {\em \round{n}\/} in $G$ if $S\subseteq G[n]$ and  the set $\{x\in S: dx=g\}$ is finite for  each $g\in G$ and every proper divisor $d$ of $n$.
\item[(ii)] For $n\in \mathbb{N}$, let $\TT_n(G)$ denote the family of all  \round{n} sets in $G$.
\item[(iii)] Define  $\TT(G)=\bigcup\{\TT_n(G):n\in\N\}$.
\end{itemize}
\end{Def}

The notion of an \round{n} set was introduced first in  \cite[Definition 3.3]{DT} under 
a different name and split  into two cases; see \cite[Remark 4.2]{DS} for an extended comparison between this terminology and the one proposed in  \cite{DT}. \Round{n} sets found applications in \cite{DS,DS-MZ,DT,TY}. 

In order to clarify Definition \ref{def:of:almost:n:torsion:sets} and to facilitate future references, we collect basic properties of  \round{n} sets in our next remark.

\begin{remark}\label{Background} 
Let $G$ be an abelian group. 
\begin{itemize}
\item[(i)] $\TT_1(G)=\emptyset$. 
\item[(ii)] $ \TT_n(G)\cap \TT_m(G)=\emptyset$ for distinct $m, n\in\N$.
\item[(iii)] Each family $\TT_n(G)$ is closed
under taking infinite subsets, and
so $\TT(G)$ has the same property.
\item[(iv)] If $H$ is a subgroup of $G$, then $\TT_n(H)= \{S\in\TT_n(G): S\subseteq H\}$ for every $n\in\N$; see \cite[Lemma 4.4]{DS}. In particular,  whether a set $S$ is \round{n} in $G$ depends only on the subgroup of $G$ generated by $S$. 
\item[(v)] If $S\in \TT_n(G)$ for some $n\in\N$,
then the set $\{\pi\in\du{G}:\pi(S)$ is dense in $\T[n]\}$ is dense in $\du{G}$
(see \cite[Lemma 3.7]{DT} for $n\ge 2$ and \cite[Lemma 3.3]{TY} for $n=0$).
In particular, $\mathbb{D}({S,\T})$ is dense in $\du{G}$ for every $S\in \TT_0(G)$.
\item[(vi)] Every infinite subset $X$ of $G$ contains a set of the form $g+S$, where $g\in G$ and $S\in \TT_n(G)$ for some $n\in\N$ \cite[Lemma 3.6]{DT}.
\end{itemize}
\end{remark}

Item (v) of this remark explains why \round{n} sets appear prominently 
in Theorems \ref{main:result} and \ref{main:corollary},
as well as in
Corollary \ref{one:homomorphism}.

\begin{notation}
For an abelian group $G$ and  $E\in \TT(G)$, we use $\n_E$ to denote the unique integer $n\in\N$  such that $E\in \TT_n(G)$. (The uniqueness of such $n$ follows from Remark \ref{Background}(ii).)
\end{notation}

Recall that a space $X$ has the {\em Baire property\/} if 
$\bigcap\{U_n:n\in\N\}$ is dense in $X$ for every sequence 
$\{U_n:n\in\N\}$ of dense open subsets of $X$.

All major results in this paper are corollaries of a single general theorem: 

\begin{thm}
\label{main:result}
For an abelian group $G$ and a countable subfamily $\E\not=\emptyset$ of $\TT(G)$, define
\begin{equation}
\label{Sigma:G}
\mbox{$\Sigma_{G,\E}=\{\sigma\in \Hom(G,\T^\cont):$ 
$\sigma(E)$ is dense in $\T[\n_E]^\cont$ for every $E\in\mathscr{E}\}$.}
\end{equation}
Then:
\begin{itemize}
\item[(i)]
 $\Sigma_{G,\E}$ is a dense subset of $\Hom(G,\T^\cont)=\du{G}^\cont$ having the Baire property;
\item[(ii)] 
$\Sigma_{G,\E}\cap \Mono(G,\T^\cont)$ is a dense subset of $\du{G}^\cont$ having the Baire property if and only if $|G|\le 2^\cont$.
\end{itemize}
\end{thm}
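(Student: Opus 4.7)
The plan is to reduce Theorem \ref{main:result} to a multi-coordinate strengthening of Remark \ref{Background}(v), which I expect to be the technical core of Sections \ref{dual:group:section}--\ref{Kurama:section}: for each $E\in\TT_n(G)$ and each finite set $F$, the set
\[
\{(\pi_\alpha)_{\alpha\in F}\in\du{G}^F : \{(\pi_\alpha(x))_{\alpha\in F}:x\in E\} \text{ is dense in } \T[n]^F\}
\]
is dense in $\du{G}^F$. When $|F|=1$ this is exactly Remark \ref{Background}(v); the case $|F|\ge 2$ is the real obstacle, since it is not a formal consequence of the single-coordinate case and must exploit the \round{n} property (finiteness of the fibers $\{x\in E:dx=g\}$ for proper divisors $d$ of $n$) jointly in all coordinates.

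Granting this lemma, I would prove density in Part~(i) by a bookkeeping construction. Given a nonempty basic open $W=W_0\times\du{G}^{\cont\setminus F_0}$ with finite $F_0\subseteq\cont$, pick any countable $A\subseteq\cont$ containing $F_0$. Enumerate $\E=\{E_k:k\in\N\}$ and, for each $k$ and each finite $F\subseteq A$, a countable $\pi$-base of open boxes $V$ in $\T[\n_{E_k}]^F$, yielding countably many ``hit-$V$'' requirements. Starting inside $W_0$ on $F_0$, extend inductively the partial choice $(\sigma_\alpha)_{\alpha\in A}\in\du{G}^A$ on finitely many coordinates at each stage, realizing the next requirement via the key lemma within a small open neighborhood of the current partial choice. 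Since density in $\T[\n_E]^\cont$ is equivalent to density of every finite-coordinate projection and the finite $F$'s cofinally appear inside $A$, the resulting $\sigma$ lies in $W\cap\Sigma_{G,\E}$.

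For the Baire property, the same construction shows that for any countable $A\subseteq\cont$ the set
\[
\Sigma^A=\{\rho\in\du{G}^A : \rho(E) \text{ is dense in } \T[\n_E]^A \text{ for every } E\in\E\}
\]
is a dense $G_\delta$ in the compact (hence Baire) space $\du{G}^A$: density of $\rho(E)$ in the second-countable space $\T[\n_E]^A$ reduces to countably many ``hit-$V$'' open conditions on $\rho$, each of which is dense by the key lemma. Pulling $\Sigma^A$ back through the projection $p_A:\du{G}^\cont\to\du{G}^A$ yields a dense $G_\delta$ subset of $\du{G}^\cont$ contained in $\Sigma_{G,\E}$. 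Since any countable family of dense open subsets of $\Sigma_{G,\E}$ involves only countably many coordinates, these can all be absorbed into a single countable $A$, and density of their intersection reduces to the Baire property of $\du{G}^A$; thus $\Sigma_{G,\E}$ is a Baire space.

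For Part~(ii), the direction ``$|G|\le 2^\cont$ is necessary'' is immediate from $|\T^\cont|=2^\cont$. For sufficiency, I would modify the Part~(i) construction to additionally guarantee injectivity, interleaving the bookkeeping with requirements separating each $g\in G\setminus\{0\}$ from $0$. Since $|G\setminus\{0\}|$ may exceed $\cont$, these cannot all fit into one countable list, so the argument proceeds fiberwise: fix a generic $\rho\in\Sigma^A$ for a well-chosen countable $A$ and show that the set of completions $(\sigma_\alpha)_{\alpha\in\cont\setminus A}\in\du{G}^{\cont\setminus A}$ extending $\rho$ to a monomorphism $G\to\T^\cont$ is dense, invoking Kakutani's classical embedding of groups of size $\le 2^\cont$ into $\T^\cont$ (applied to $G/\ker\rho$). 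The main obstacle throughout is the multi-coordinate density lemma; once established, everything else is a careful but largely standard Baire-category argument.
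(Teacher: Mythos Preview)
Your proposal has a genuine gap: the step ``pulling $\Sigma^A$ back through the projection $p_A:\du{G}^\cont\to\du{G}^A$ yields a dense $G_\delta$ subset of $\du{G}^\cont$ contained in $\Sigma_{G,\E}$'' is false. If $p_A(\sigma)\in\Sigma^A$, then $\sigma(E)$ projects densely into $\T[\n_E]^A$, but this says nothing about the $\cont\setminus A$ coordinates, so $\sigma(E)$ need not be dense in $\T[\n_E]^\cont$. In fact the paper proves (Remark~\ref{non:G-delta}(ii)) that $\Sigma_{G,\E}$ contains \emph{no} non-empty $G_\delta$-subset of $\du{G}^\cont$, directly contradicting your claim. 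The same problem infects your density argument for Part~(i): your bookkeeping only constructs $(\sigma_\alpha)_{\alpha\in A}$ with the right hitting properties on finite $F\subseteq A$, and any extension to $\cont\setminus A$ is uncontrolled; the resulting $\sigma$ has no reason to lie in $\Sigma_{G,\E}$.

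The missing ingredient is precisely what the paper supplies in Lemma~\ref{Kurama:lemma}: a Hewitt--Marczewski--Pondiczery type construction producing a single homomorphism $\pi:H\to\T^\cont$ that simultaneously makes each member of a countable family $\mathscr{S}\subseteq\TT(H)$ dense in the appropriate $\T[\n_S]^\cont$. This cannot be replaced by a Baire-category argument on a countable coordinate set. The paper's proof splits $\cont=Y\cup(\cont\setminus Y)$ with $Y$ countable: on $Y$ it uses Baire category in $\du{H}^Y$ (after a reflection step, Lemma~\ref{reflection:lemma}, to reduce the given open sets to depend on countably many coordinates and a countable subgroup $H$) to obtain $\theta$ meeting the open conditions; on $\cont\setminus Y$ it invokes Lemma~\ref{Kurama:lemma} applied to the countable family $\{S_{E,O}:\theta(S_{E,O})\text{ hits }O\}$ to obtain $\pi$; and then glues $\theta$ and $\pi$ and extends to $G$ (via \cite[Lemma~3.17]{DS} for the monomorphism case). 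Your multi-coordinate density lemma is indeed the paper's Lemma~\ref{denseness:in:countable:powers}, and your instinct that it is the technical core is right---but it is used both inside the Baire argument on $Y$ \emph{and} inside the HMP construction on $\cont\setminus Y$, and the latter is the step you are missing.
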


The proof of this theorem is postponed until Section \ref{proof:section}.

\begin{remark}
The statement of 
Theorem \ref{main:result} appears to be the best possible.
\begin{itemize}
\item[(i)]
One cannot strengthen the conclusion of Theorem \ref{main:result} by replacing $\cont$ in it with a cardinal $\kappa>\cont$. Indeed, if $E\in \E$, then  $\T[\n_E]^\kappa$ must be separable, 
and since $\n_E\not=1$ by Remark \ref{Background}(i), this implies $\kappa\le \cont$; see, for example, \cite[Theorem 2.3.15]{Eng}.
\item[(ii)]
One cannot  strengthen the statement of Theorem \ref{main:result} by increasing the size of the family $\E$;  see Remark \ref{restriction:on:size}(ii). 
\item[(iii)] 
 One cannot strengthen  the Baire property of $\Sigma_{G,\E}$ to requiring $\Sigma_{G,\E}$ to be a dense $G_\delta$-subset of $\du{G}^\cont$. In fact, $\Sigma_{G,\E}$ does not even contain any non-empty $G_\delta$-subset of $\du{G}^\cont$; 
see Remark \ref{non:G-delta}(ii). 
\end{itemize}
\end{remark}

\begin{remark}
\label{Baire:remark}
Let $X$ be a dense subspace of a space $Y$. 
\begin{itemize}
\item[(i)]
$X$ has the Baire property if and only if $X\cap W\cap\bigcap\mathscr{U}\not=\emptyset$
whenever $W$ is a non-empty open subset of $X$ and $\mathscr{U}$ is a countable family of open dense subsets of $Y$.
\item[(ii)]
If $X$ is a dense subspace of $Y$ having the Baire property, then so is every space $Z$ satisfying $X\subseteq Z\subseteq Y$.
\end{itemize}
\end{remark}

\section{A Kronecker-Weyl theorem for subsets of abelian groups}
\label{Kronecker-Weyl:section}

We start by providing a convenient reformulation of Definition \ref{def:of:almost:n:torsion:sets} in the case $n=0$:  A subset $E$ of an abelian group $G$ is  \round{0} if and only if $E$ is countably infinite, but $E\cap (g+G[k])$ is finite whenever $g\in G$ and $k\in \N^+$.

The next theorem is the principal result of this paper. 
\begin{thm}
\label{main:corollary}
For a subset $S$ of an abelian group $G$, the following conditions are equivalent:
\begin{itemize}
\item[(i)]
$\mathbb{D}({S,\T^\kappa})\not=\emptyset$ for some cardinal $\kappa\ge 1$,
\item[(ii)]
$\mathbb{D}({S,\T^\cont})$ is a dense subset of $\du{G}^\cont$ having the Baire property,
\item[(iii)] $S$ contains an \round{0} set.
\end{itemize}
Furthermore, 
if one additionally assumes that $|G|\le 2^\cont$, then the following 
item can be added to the list of equivalent conditions (i)--(iii):
\begin{itemize}
\item[(iv)] 
$\mathbb{D}({S,\T^\cont})\cap\Mono(G,\T^\cont)$ 
is a dense subset of $\du{G}^\cont$ having the Baire property.
\end{itemize}
\end{thm}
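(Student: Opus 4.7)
The plan is to establish the cycle (ii)$\Rightarrow$(i)$\Rightarrow$(iii)$\Rightarrow$(ii) and then to derive the additional equivalence with (iv), under $|G|\leq 2^\cont$, by mirroring the (iii)$\Rightarrow$(ii) step using the stronger part of Theorem \ref{main:result}. The implication (ii)$\Rightarrow$(i) is immediate, since density forces nonemptiness and $\kappa=\cont$ then works; (iv)$\Rightarrow$(ii) is equally trivial, as $\mathbb{D}(S,\T^\cont)\cap\Mono(G,\T^\cont)\subseteq \mathbb{D}(S,\T^\cont)$ and Remark \ref{Baire:remark}(ii) transfers density and the Baire property upward.

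For (iii)$\Rightarrow$(ii) I would fix an almost $0$-torsion $E\subseteq S$ granted by (iii) and apply Theorem \ref{main:result}(i) to the singleton family $\E=\{E\}$. Since $\n_E=0$ and $\T[0]=\T$, the set $\Sigma_{G,\{E\}}$ defined in~\eqref{Sigma:G} is precisely $\{\sigma\in\du{G}^\cont:\sigma(E)\text{ is dense in }\T^\cont\}$, and is thus a dense subset of $\du{G}^\cont$ with the Baire property. The inclusion $E\subseteq S$ gives $\Sigma_{G,\{E\}}\subseteq \mathbb{D}(S,\T^\cont)$, and Remark \ref{Baire:remark}(ii) transfers both properties to $\mathbb{D}(S,\T^\cont)$, proving (ii). Replacing Theorem \ref{main:result}(i) by Theorem \ref{main:result}(ii) yields (iv) under $|G|\leq 2^\cont$ by the same reasoning applied to $\Sigma_{G,\{E\}}\cap\Mono(G,\T^\cont)\subseteq \mathbb{D}(S,\T^\cont)\cap\Mono(G,\T^\cont)$.

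The genuinely substantial step is (i)$\Rightarrow$(iii). Given $h\in\mathbb{D}(S,\T^\kappa)$ with $\kappa\geq 1$, I would first compose $h$ with any coordinate projection $\T^\kappa\to\T$ to obtain $h'\in\Hom(G,\T)$ with $h'(S)$ dense in $\T$. Since $\T$ is second countable, I would extract from $h'(S)$ a countably infinite set $D'=\{e_n:n\in\N\}$ of pairwise distinct elements still dense in $\T$, by picking an $e_n$ from $h'(S)\cap B_n$ for each $B_n$ in a fixed countable base of $\T$ and discarding duplicates. Lifting each $e_n$ to some $a_n\in S\cap (h')^{-1}(e_n)$ produces a countably infinite $E=\{a_n:n\in\N\}\subseteq S$ on which $h'|_E$ is injective. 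To verify $E\in\TT_0(G)$ via the reformulation at the start of Section \ref{Kronecker-Weyl:section}, I would fix arbitrary $g\in G$ and $k\in\N^+$: any $a_n\in E\cap(g+G[k])$ satisfies $h'(a_n)\in h'(g)+\T[k]$, a set of exactly $k$ elements, so the injectivity of $h'|_E$ forces $|E\cap(g+G[k])|\leq k<\infty$.

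The main obstacle, if any, is conceptual rather than computational: one must recognize that the almost $0$-torsion property of $E$ is forced by injectivity of $h'|_E$ together with the elementary observation that $h'$ sends each coset of $G[k]$ into a coset of the finite subgroup $\T[k]\subseteq\T$. All substantive difficulty has already been absorbed into Theorem \ref{main:result}, whose own proof is postponed to Section \ref{proof:section}.
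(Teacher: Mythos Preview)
Your proof is correct and, for the implications (ii)$\Rightarrow$(i), (iv)$\Rightarrow$(ii), (iii)$\Rightarrow$(ii), and (iii)$\Rightarrow$(iv), essentially identical to the paper's: both feed the singleton family $\E=\{E\}$ into Theorem~\ref{main:result} and transfer density and the Baire property upward via Remark~\ref{Baire:remark}(ii).

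The one genuine difference is (i)$\Rightarrow$(iii). The paper argues that density of $\sigma(S)$ in $\T^\kappa$ forces $n\sigma(S)$ to be dense in $\T^\kappa$ for every $n\in\N^+$, hence $nS$ is infinite for all $n\in\N^+$, and then invokes \cite[Proposition~5.11]{DS-MZ} as a black box to conclude that $S$ contains an \round{0} subset. Your route is more direct and fully self-contained: you project to a single coordinate to obtain a character $h'\in\du{G}$ with $h'(S)$ dense in $\T$, lift a countably infinite set of distinct values to a set $E\subseteq S$ on which $h'$ is injective, and then observe that $h'$ carries each coset $g+G[k]$ into the $k$-element set $h'(g)+\T[k]$, so injectivity bounds $|E\cap(g+G[k])|$ by $k$. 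This buys you an explicit construction of the \round{0} set with an explicit finite bound, at no extra cost; the paper's approach trades this for a one-line citation. Note, incidentally, that the density of your auxiliary set $D'$ plays no role after the lifting step---all you actually use is that $h'(S)$ is infinite, which follows already from its density in the perfect space $\T$.
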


\begin{proof} The implication (ii)$\to$(i) is clear.

(i)$\to$(iii) Let $\kappa$ be a cardinal from (i). Fix $\sigma\in \mathbb{D}({S,\T^\kappa})$. Since $\sigma(S)$ is dense in $\T^\kappa$, for every $n\in\N$ the set
$n \sigma(S)$ must be dense in $n\T^\kappa=\T^\kappa$. In particular,
$nS$ must be infinite for every $n\in\N^+$. Then $S$ contains an 
\round{0} subset by \cite[Proposition 5.11]{DS-MZ}.

The implication (iv)$\to$(ii) follows Remark \ref{Baire:remark}(ii).

(iii)$\to$(ii)
Let $E\subseteq S$ be an \round{0} set. Define $\E=\{E\}$.  Since $\T[0]=\T$, \eqref{Sigma:G} yields $\Sigma_{G,\E}\subseteq \mathbb{D}({S,\T^\cont})$. From this and Theorem \ref{main:result}(i), we get (ii).

(iii)$\to$(iv)  Assume now that $|G|\le 2^\cont$. Let $E$ and $\mathscr{E}$ be as in the proof of the implication  (iii)$\to$(ii). Since  $\Sigma_{G,\E}\cap \Mono(G,\T^\cont)\subseteq \mathbb{D}({S,\T^\cont})\cap \Mono(G,\T^\cont)$, and the former set is a dense subset of $\du{G}^\cont$ having the Baire property by Theorem \ref{main:result}(ii), so is the latter set; see Remark \ref{Baire:remark}(ii).
\end{proof}

One cannot strengthen items (ii) or (iv) of Theorem \ref{main:corollary}
by requiring $\mathbb{D}({S,\T^\cont})$ to be a dense $G_\delta$-subset of $\du{G}^\cont$. Indeed, we shall show in Remark \ref{non:G-delta} that $\mathbb{D}({S,\T^\cont})$ does not even contain any 
non-empty $G_\delta$-subset of $\du{G}^\cont$.

\begin{cor}
\label{one:homomorphism}
For a subset $S$ of an abelian group $G$, the following conditions are equivalent:
\begin{itemize}
\item[(i)] there exists a homomorphism $\pi:G\to \T^\cont$ such that 
$\pi(S)$ is dense in $\T^\cont$,
\item[(ii)] $S$ contains an \round{0} set.
\end{itemize}
Furthermore, if one additionally assumes that $|G|\le 2^\cont$, then the following  item can be added to the list of equivalent conditions (i) and (ii):
\begin{itemize}
\item[(iii)] there exists a monomorphism $\pi:G\to \T^\cont$ such that  $\pi(S)$ is dense in $\T^\cont$.
\end{itemize}
\end{cor}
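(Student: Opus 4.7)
The plan is to deduce Corollary \ref{one:homomorphism} as an immediate pointwise extraction from the already-stated Theorem \ref{main:corollary}. The key observation is purely translational: by Definition \ref{definition:U:D}(ii), condition (i) of the corollary is literally the statement that $\mathbb{D}(S,\T^\cont)\neq\emptyset$, and condition (iii) is literally the statement that $\mathbb{D}(S,\T^\cont)\cap\Mono(G,\T^\cont)\neq\emptyset$. Everything will follow by reading Theorem \ref{main:corollary} through these reformulations.

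For the equivalence (i)$\Leftrightarrow$(ii), I would argue in both directions. If (i) holds, then $\mathbb{D}(S,\T^\cont)\neq\emptyset$, which is item (i) of Theorem \ref{main:corollary} applied to the cardinal $\kappa=\cont\ge 1$; the equivalence (i)$\Leftrightarrow$(iii) of that theorem then gives that $S$ contains an \round{0} set, yielding (ii) here. Conversely, if $S$ contains an \round{0} set, the implication (iii)$\to$(ii) of Theorem \ref{main:corollary} asserts that $\mathbb{D}(S,\T^\cont)$ is a dense subset of $\du{G}^\cont$ having the Baire property, so in particular it is non-empty, and (i) follows.

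For the additional equivalence under $|G|\le 2^\cont$, the implication (iii)$\Rightarrow$(i) is trivial since every monomorphism is a homomorphism. For the converse (ii)$\Rightarrow$(iii), I would invoke the implication (iii)$\to$(iv) of Theorem \ref{main:corollary}: since $|G|\le 2^\cont$ and $S$ contains an \round{0} set, the set $\mathbb{D}(S,\T^\cont)\cap\Mono(G,\T^\cont)$ is a dense subset of $\du{G}^\cont$ with the Baire property, hence non-empty, producing the desired monomorphism.

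There is effectively no obstacle in this argument; all substantive content has already been discharged in the proof of Theorem \ref{main:corollary}, and the present corollary is simply the syntactic restriction of that theorem to the mere non-emptiness of the relevant function spaces. The only thing to be careful about is to make sure the cardinal hypothesis $|G|\le 2^\cont$ is invoked precisely where item (iv) of Theorem \ref{main:corollary} is used, since this hypothesis is required only for the monomorphism statement and not for the homomorphism one.
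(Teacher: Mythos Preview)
Your proposal is correct and matches the paper's intended approach: the corollary is stated immediately after Theorem~\ref{main:corollary} without proof, precisely because it is the pointwise extraction you describe, obtained by reading the non-emptiness of $\mathbb{D}(S,\T^\cont)$ and $\mathbb{D}(S,\T^\cont)\cap\Mono(G,\T^\cont)$ off items (i)--(iv) of that theorem.
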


The version of this corollary for homomorphisms (monomorphisms) into $\T^\kappa$ for cardinals $\kappa> \cont$ is proved in our paper \cite{DS_HMP}.

According to the classical Kronecker theorem, for every $n$-tuple $(\alpha_1, \ldots, \alpha_n)$ of real numbers the cyclic subgroup of $\T^n$ generated by $(\alpha_1, \ldots, \alpha_n)$ is dense
in $\T^n$ if and only if $1,\alpha_1, \ldots, \alpha_n$ are rationally independent. 
This implies the well-known fact that $\T^\cont$ contains a dense cyclic subgroup $\Z$.
In our next corollary we describe the  abelian groups that admit a dense embedding into $\T^\cont$:

\begin{cor} \label{dense:subgroups:T^c}
An abelian group $G$  is isomorphic to a dense subgroup of  $\T^\cont$ if and only if $G$ is not bounded and $|G|\leq 2^\cont$.
\end{cor}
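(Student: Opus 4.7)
For the necessity direction, if $G$ is isomorphic to a dense subgroup of $\T^\cont$, then the cardinality bound $|G|\le |\T^\cont|=2^\cont$ is immediate. If in addition $G$ were bounded, say $G=G[n]$ for some $n\in\N^+$, then (the image of) $G$ would be contained in $\T[n]^\cont$, which is a proper closed subgroup of $\T^\cont$ (since $\T[n]$ is a finite, hence proper, subgroup of $\T$), contradicting density. Hence $G$ must be unbounded.

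For the sufficiency direction, assume $G$ is unbounded with $|G|\le 2^\cont$. The strategy is to produce an \round{0} subset $E$ of $G$ and invoke Corollary \ref{one:homomorphism}(iii) with $S=E$: this yields a monomorphism $\pi:G\to \T^\cont$ with $\pi(E)$ dense in $\T^\cont$, whence $\pi(G)\supseteq \pi(E)$ is a dense subgroup of $\T^\cont$ isomorphic to $G$.

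To produce such an $E$, I first pass to a countable unbounded subgroup $H\le G$: for each $n\in\N^+$ pick $g_n\in G$ with $ng_n\ne 0$ (this is possible precisely because $G$ is unbounded) and let $H=\langle g_n:n\in\N^+\rangle$. Next I would verify that $nH$ is infinite for every $n\in\N^+$: if $nH$ were finite, then so would $H/H[n]\cong nH$ be, and setting $m=|H/H[n]|$ one gets $mH\subseteq H[n]$ and hence $nmH=\{0\}$, contradicting the unboundedness of $H$. Since $H$ is countably infinite with $nH$ infinite for every $n\in\N^+$, \cite[Proposition 5.11]{DS-MZ} (already used in the proof of Theorem \ref{main:corollary}) supplies an \round{0} subset $E\subseteq H$; by Remark \ref{Background}(iv), $E$ is \round{0} in $G$ as well.

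There is no substantive obstacle: the deep content — simultaneous injectivity of $\pi$ together with density of $\pi(E)$ — has already been absorbed into Corollary \ref{one:homomorphism}, and the existence of an \round{0} subset inside an unbounded group is a routine application of \cite[Proposition 5.11]{DS-MZ} once one records the elementary observation that $H$ unbounded forces $nH$ to be infinite.
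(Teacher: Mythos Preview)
Your proof is correct and follows essentially the same route as the paper: both directions hinge on producing an \round{0} subset of $G$ and then invoking Corollary~\ref{one:homomorphism}(iii). The only cosmetic difference is that the paper cites \cite[Corollary 5.12]{DS-MZ} directly to obtain the \round{0} set, whereas you rederive this by first passing to a countable unbounded subgroup $H$, checking that $nH$ is infinite for all $n\in\N^+$, and then applying \cite[Proposition 5.11]{DS-MZ}; you also spell out the necessity direction that the paper leaves as ``clear.''
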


\begin{proof}
Suppose that an abelian group $G$ is not bounded and  $|G|\leq 2^\cont$. By \cite[Corollary 5.12]{DS-MZ}, $G$ contains an \round{0} set $S$.
Applying Corollary \ref{one:homomorphism}, we can find a monomorphism  $\pi:G\to \T^\cont$ such that $\pi(S)$ is dense in $\T^\cont$. Then
$\pi(G)$ is dense in $\T^\cont$ as well. The reverse implication is clear.
\end{proof}

Every homomorphism $\pi:G\to \T^\cont$ of an abelian group $G$ defines the action  $(g,x)\mapsto gx$ of $G$ on $\T^\cont$ by $gx=\pi(g)+x$ for $g\in G$ and $x\in \T^\cont$. In this language Corollary \ref{one:homomorphism} can be restated as follows.

\begin{cor}
\label{action}
Let $S$ be a subset of an abelian group $G$ such that $S$ contains an \round{0} set. Then there exists an action of $G$ on $\T^\cont$ by homeomorphisms of $\T^\cont$ such that the ``$S$-orbit'' $\{gx:g\in S\}$ of each point $x\in \T^\cont$ is dense in $\T^\cont$.
\end{cor}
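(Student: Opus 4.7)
The plan is to reduce Corollary \ref{action} directly to Corollary \ref{one:homomorphism}, using the identification already indicated in the sentence preceding the statement: any homomorphism $\pi:G\to\T^\cont$ produces a $G$-action on $\T^\cont$ via $gx=\pi(g)+x$. Since $S$ is assumed to contain an \round{0} set, Corollary \ref{one:homomorphism}(i)$\Leftrightarrow$(ii) yields a homomorphism $\pi:G\to\T^\cont$ with $\pi(S)$ dense in $\T^\cont$; no size restriction on $G$ is needed, so we do not require the monomorphism refinement.

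Next I would verify that the formula $gx=\pi(g)+x$ defines a genuine action of $G$ on $\T^\cont$ by homeomorphisms. Associativity and the identity axiom are immediate from $\pi$ being a homomorphism: $(g_1+g_2)x=\pi(g_1+g_2)+x=\pi(g_1)+(\pi(g_2)+x)=g_1(g_2x)$, and $0x=\pi(0)+x=x$. Each map $x\mapsto gx$ is a translation of the topological group $\T^\cont$, hence a homeomorphism with inverse $x\mapsto(-g)x$.

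Finally, for each $x\in\T^\cont$ the $S$-orbit is
\begin{equation*}
\{gx:g\in S\}=\{\pi(g)+x:g\in S\}=\pi(S)+x.
\end{equation*}
Since $\pi(S)$ is dense in $\T^\cont$ and translation by $x$ is a homeomorphism of $\T^\cont$, the set $\pi(S)+x$ is also dense in $\T^\cont$, which gives the desired conclusion.

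There is no real obstacle here: the corollary is essentially a reformulation of Corollary \ref{one:homomorphism} in the language of group actions, so the entire content is packed into the earlier theorem. The only thing worth emphasizing in the write-up is that one should quote Corollary \ref{one:homomorphism} in its homomorphism form (rather than the monomorphism form), so that no assumption on $|G|$ is implicitly imposed.
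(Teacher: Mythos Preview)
Your proof is correct and is exactly the approach the paper takes: the paper does not even give a separate proof, stating instead that Corollary~\ref{action} is simply Corollary~\ref{one:homomorphism} restated in the language of the action $gx=\pi(g)+x$. Your observation about using the homomorphism (not monomorphism) version to avoid any cardinality hypothesis on $G$ is apt.
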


The following particular case of Corollary \ref{action} seems to be  new as well.

\begin{cor}
\label{flow}
For every infinite subset $S$ of $\Z$, there exists a translation $f:\T^\cont\to\T^\cont$ of the group
$\T^\cont$ such that the ``$S$-orbit'' $\{f^n(x):n\in S\}$ of each point $x\in \T^\cont$ is dense in $\T^\cont$. 
\end{cor}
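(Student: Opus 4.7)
The plan is to apply Corollary \ref{action} with $G=\Z$ and then reinterpret the resulting $\Z$-action as iteration of a single translation.

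First, I would verify that any infinite $S\subseteq\Z$ contains an \round{0} set. Recall that the proper divisors of $0$ are exactly the positive integers. For any $d\in\N^+$ and any $g\in\Z$, the equation $dx=g$ has at most one solution in $\Z$ since $\Z$ is torsion-free, so $\{x\in S:dx=g\}$ has cardinality at most $1$. Hence every countably infinite subset $E$ of $S$ belongs to $\TT_0(\Z)$, and such an $E$ exists inside $S$ because $S$ is infinite.

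Next, I would apply Corollary \ref{action} (with the ambient group $\Z$ and the subset $S$) to produce a homomorphism $\pi:\Z\to\T^\cont$ such that $\pi(S)$ is dense in $\T^\cont$; the associated action is $n\cdot x=\pi(n)+x$. Set $\alpha=\pi(1)$ and define the translation $f:\T^\cont\to\T^\cont$ by $f(x)=x+\alpha$. Then $f$ is a homeomorphism of the topological group $\T^\cont$ with inverse $f^{-1}(x)=x-\alpha$, and since $\pi$ is a homomorphism we have $\pi(n)=n\alpha$ for every $n\in\Z$. An immediate induction (handling positive and negative $n$ separately using $f^{-1}$) gives $f^n(x)=x+n\alpha=\pi(n)+x$ for all $n\in\Z$ and all $x\in\T^\cont$.

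Finally, for every $x\in\T^\cont$ the set $\{f^n(x):n\in S\}=x+\pi(S)$ is a translate of $\pi(S)$, hence dense in $\T^\cont$ because $\pi(S)$ is dense and translations are homeomorphisms. This is the desired conclusion.

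There is no serious obstacle in the argument: the only thing one needs to notice is that torsion-freeness of $\Z$ makes every countably infinite subset automatically \round{0}, after which Corollary \ref{action} delivers the required $\pi$ and the $\Z$-action is tautologically the iteration of the translation $f(x)=x+\pi(1)$.
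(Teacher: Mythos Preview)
Your proof is correct and follows the same approach as the paper: the paper's proof is the one-liner ``Follows from Corollary~\ref{action} and the fact that every infinite subset of $\Z$ is \round{0},'' and you have simply unpacked this, verifying the \round{0} condition via torsion-freeness and making explicit that the $\Z$-action coming from $\pi$ is iteration of the translation $f(x)=x+\pi(1)$.
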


\begin{proof}
Follows from Corollary \ref{action} and the fact that every infinite subset of $\Z$ is \round{0}.
\end{proof}

The the best of our knowledge, the following application gives a new contribution to diophantine approximation:  

\begin{cor} 
\label{diophantine}
For every infinite set $S$ of integers, there exists an indexed set $\{x_\alpha:\alpha<\cont\}\subseteq [0,1)$ (depending on $S$ and) having the following property:  If  $\varepsilon>0$, 
$k\in\N^+$, 
$y_1,\dots,y_k\in\mathbb{R}$
and 
$\alpha_1<\cont, \ldots, \alpha_k<\cont$, then one can find $s\in S$ and  $n_1, \ldots ,n_k\in\Z$ such that 
$|sx_{\alpha_j}-y_j-n_j|< \varepsilon$ for every $j=1,2,\ldots, k$.  
\end{cor}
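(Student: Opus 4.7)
The plan is to deduce this corollary directly from Corollary \ref{flow}, which is the only nontrivial input needed. Every translation of the topological group $\T^\cont$ has the form $f(x) = x + t$ for some fixed element $t = (t_\alpha)_{\alpha < \cont} \in \T^\cont$, and therefore $f^n(x) = x + n t$ for every $n \in \Z$. Applying Corollary \ref{flow} to the given infinite set $S \subseteq \Z$ produces such a $t \in \T^\cont$ with the property that $\{x + n t : n \in S\}$ is dense in $\T^\cont$ for every $x \in \T^\cont$; in particular (taking $x = 0$), the set $\{n t : n \in S\}$ is dense in $\T^\cont$.

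For each $\alpha < \cont$, I then choose the unique representative $x_\alpha \in [0,1)$ of the class $t_\alpha \in \T = \mathbb{R}/\Z$, and I claim the family $\{x_\alpha : \alpha < \cont\}$ works. Given $\varepsilon > 0$, $k \in \N^+$, reals $y_1, \ldots, y_k$, and ordinals $\alpha_1, \ldots, \alpha_k < \cont$, let $p : \T^\cont \to \T^k$ denote the continuous projection onto the coordinates indexed by $\alpha_1, \ldots, \alpha_k$. Continuous surjections send dense subsets to dense subsets, so $\{(s t_{\alpha_1}, \ldots, s t_{\alpha_k}) : s \in S\} = p(\{s t : s \in S\})$ is dense in $\T^k$. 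Hence there exists $s \in S$ for which the distance in $\T$ (i.e., in $\mathbb{R}/\Z$) from $s t_{\alpha_j}$ to $y_j + \Z$ is less than $\varepsilon$ for each $j = 1, \ldots, k$ simultaneously.

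The only small piece of bookkeeping left is lifting this back to $\mathbb{R}$. By the definition of the quotient metric on $\T$, the inequality $d_\T(s t_{\alpha_j}, y_j + \Z) < \varepsilon$ means that there exists $n_j \in \Z$ with $|s x_{\alpha_j} - y_j - n_j| < \varepsilon$, because $s t_{\alpha_j}$ is the class of $s x_{\alpha_j}$ modulo $\Z$. This produces exactly the required integers $n_1, \ldots, n_k$ and the element $s \in S$.

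I do not foresee any real obstacle: the corollary is essentially the translation of Corollary \ref{flow} into classical diophantine language. The only conceptual point worth emphasizing is that Corollary \ref{flow} gives density of the $S$-orbit in the full product $\T^\cont$, so passing to any finite-dimensional coordinate projection yields simultaneous approximation for arbitrarily many (though finitely many at a time) of the parameters $x_\alpha$, which is the content of the diophantine statement.
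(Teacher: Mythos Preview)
Your proof is correct and takes essentially the same approach as the paper's: obtain an element $t=(t_\alpha)_{\alpha<\cont}\in\T^\cont$ with $\{st:s\in S\}$ dense, then lift each coordinate to $[0,1)$. The only cosmetic difference is that the paper invokes Corollary~\ref{one:homomorphism} directly (with $G=\Z$, $\pi(1)=t$), whereas you cite the downstream Corollary~\ref{flow} and specialize the orbit at $x=0$; these amount to the same thing, and your unpacking of the diophantine conclusion is in fact more explicit than the paper's, which simply asserts that the $x_\alpha$ have the desired property.
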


\begin{proof}
Apply Corollary \ref{one:homomorphism} to $G=\Z$ to find a monomorphism  $\pi:\Z\to \T^\cont$ such that 
$\pi(S)$ is dense in $\T^\cont$. Let $\pi(1)=\{t_\alpha\}_{\alpha<\cont}\in \T^\cont$.
For every $\alpha<\cont$, choose $x_\alpha\in[0,1)$ such that $\psi(x_\alpha)=t_\alpha$, where $\psi:\mathbb{R}\to \mathbb{R}/\mathbb{Z}=\T$ is the natural quotient homomorphism.  Then $\{x_\alpha:\alpha<\cont\}$ has the desired properties.
\end{proof}

It should be noted that a much weaker version of Corollary \ref{diophantine}, with $\cont$ replaced by $\omega$, follows from results of \cite{TY}.

\section{Realization problem for the Zariski closure}
\label{Zariski:section}

Let $G$ be an abelian group. According to Markov \cite{Mar}, a set of the form $g+G[n]$, for a suitable $g\in G$ and  $n\in\N$, is called an {\em elementary algebraic\/} subset of $G$, and arbitrary intersections of finite unions of elementary algebraic subsets of $G$ are called {\em algebraic\/} subsets of $G$. One can easily see that the family of all algebraic subsets of $G$ is closed under finite unions and arbitrary intersections, and  contains $G$ and all finite subsets of
$G$; thus,   it  can be taken as the family of closed sets of a unique $T_1$ topology ${\mathfrak Z}_G$ on $G$. Markov \cite{Mar1, Mar} defined  the {\em algebraic closure\/} of a subset $X$ of $G$ as the intersection of all algebraic subsets of $G$ containing $X$, i.e., the smallest algebraic set that contains $X$. This definition satisfies the conditions necessary for introducing
a topological closure operator on $G$. Since a topology on a set is uniquely determined by its closure operator, it is fair to say that Markov was the first to (implicitly) define   the topology ${\mathfrak Z}_G$,  though he did not  name it. To the best of our knowledge, the first name for this topology appeared explicitly in print in a 1977 paper by Bryant \cite{Bryant},  who called it a {\em verbal topology\/} of $G$.  In a more recent  series of papers 
beginning with \cite{BMR}, Baumslag, Myasnikov and Remeslennikov have developed algebraic geometry over an abstract group $G$.  In an analogy with the Zariski topology from algebraic geometry, they introduced the  Zariski topology on the finite powers $G^n$ of a group $G$. In the particular case  when $n=1$, this topology coincides with the verbal topology of Bryant. For this reason, the topology ${\mathfrak Z}_G$ is also called the {\em Zariski topology\/} of $G$ in \cite{DS_OPIT, DS_JGT}.  
The topology ${\mathfrak Z}_G$ is Noetherian, and so compact. While ${\mathfrak Z}_G$ is a $T_1$ topology, it is Hausdorff only when $G$ is finite \cite{DS-MZ}. A comprehensive study of the Zariski topology is carried out in \cite{DS-MZ}.

Given a topology $\mathcal{T}$ on $G$, we  denote  by $\CL_{\mathcal{T}}(X)$ the $\mathcal{T}$-closure of a set $X\subseteq G$. We 
call $\CL_{\Zar_G}(X)$ the {\em Zariski closure\/} of  $X$ in $G$. (Thus, $\CL_{\Zar_G}(X)$ is the algebraic closure of  $X$
in the terminology of Markov \cite{Mar1, Mar}). For every Hausdorff group topology $\mathcal{T}$ on $G$,  one has
  $\Zar_G\subseteq  \mathcal{T}$, and therefore, $\CL_{\mathcal{T}}(X)\subseteq \CL_{\Zar_G}(X)$ for each set $X\subseteq G$. This  inclusion naturally leads to the following {\em realization problem for the Zariski closure\/}:
For a given set $X\subseteq G$, can one always find a Hausdorff group topology $\mathcal{T}$ on $G$ 
 such that $\CL_{\mathcal{T}}(X)= \CL_{\Zar_G}(X)$?  This problem was first considered  by Markov in \cite{Mar}, who proved that for every subset $X$ of a countable group  $G$,
 there exists a metric group topology $\mathcal{T}$ on $G$ such that $\CL_{\mathcal{T}}(X)= \CL_{\Zar_G}(X)$.  We make the following contribution to this general problem  in the abelian case:

\begin{thm}
\label{realizing:Zariski:closure}
Let $G$ be an abelian group of size at most $2^\cont$, and let $\mathscr{X}$ be a countable family of subsets of $G$. Then there exists a precompact Hausdorff group topology
$\mathcal{T}$ on $G$ such that the $\mathcal{T}$-closure of each $X\in \mathscr{X}$ coincides with its 
Zariski
closure.
\end{thm}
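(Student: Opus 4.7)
The plan is to produce $\mathcal{T}$ as the pullback to $G$ of the Tychonoff topology on $\T^\cont$ via a suitable monomorphism $\pi\colon G\to\T^\cont$ furnished by Theorem \ref{main:result}(ii). Any such $\pi$ embeds $G$ as a topological subgroup of the compact Hausdorff group $\T^\cont$, so the resulting $\mathcal{T}$ is automatically a precompact Hausdorff group topology. Moreover, one inclusion comes for free: since $\Zar_G$ is coarser than every Hausdorff group topology on $G$, the inclusion $\CL_\mathcal{T}(X)\subseteq\CL_{\Zar_G}(X)$ holds for every $X\subseteq G$. The task is to choose $\pi$ so that the opposite inclusion holds simultaneously for every $X\in\mathscr{X}$; since finite sets equal their own Zariski closure and are closed in every Hausdorff topology, one may assume every $X\in\mathscr{X}$ is infinite.

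The first main step is a structural lemma: for each infinite $X\subseteq G$ one seeks $m_X\in\N$, elements $h_{X,1},\dots,h_{X,m_X}\in G$, exponents $d_{X,1},\dots,d_{X,m_X}\in\N$, \round{d_{X,i}} sets $F_{X,i}\in\TT_{d_{X,i}}(G)$ with $h_{X,i}+F_{X,i}\subseteq X$, and a finite set $F_X\subseteq X$, such that
\[
\CL_{\Zar_G}(X)=\bigcup_{i=1}^{m_X}\bigl(h_{X,i}+G[d_{X,i}]\bigr)\cup F_X.
\]
That $\CL_{\Zar_G}(X)$ is a finite union of cosets $h+G[d]$ together with finitely many stray points is immediate from the Noetherianity of $\Zar_G$. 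Starting from such a minimal (irredundant) decomposition, one would discard every coset whose trace $X\cap(h+G[d])$ is finite, absorbing its finitely many points of $X$ into $F_X$ and checking that any algebraic point in the coset but outside $X$ is already redundant. To build each $F_{X,i}$ one would then apply Remark \ref{Background}(vi) to the infinite trace $X\cap(h_{X,i}+G[d_{X,i}])$ and refine, using the methods of \cite{DS-MZ} (in the spirit of \cite[Proposition 5.11]{DS-MZ}), so that the exponent $n$ of the \round{n} set delivered by Remark \ref{Background}(vi) actually coincides with the prescribed $d_{X,i}$.

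The second step is to collect the countable family $\E=\{F_{X,i}:X\in\mathscr{X},\,1\le i\le m_X\}\subseteq\TT(G)$ and invoke Theorem \ref{main:result}(ii), whose cardinality hypothesis $|G|\le 2^\cont$ is precisely the one in the theorem to be proved, to obtain a monomorphism $\pi\in\Sigma_{G,\E}\cap\Mono(G,\T^\cont)$, and then to take $\mathcal{T}$ to be the topology on $G$ inherited from $\T^\cont$ via $\pi$. To verify the remaining inclusion one takes $y\in\CL_{\Zar_G}(X)$: either $y\in F_X$, in which case $y\in X\subseteq\CL_\mathcal{T}(X)$, or $y\in h_{X,i}+G[d_{X,i}]$ for some $i$, so $\pi(y)\in\pi(h_{X,i})+\T[d_{X,i}]^\cont$; since $\pi\in\Sigma_{G,\E}$ the set $\pi(F_{X,i})$ is dense in $\T[d_{X,i}]^\cont$, hence $\pi(h_{X,i}+F_{X,i})\subseteq\pi(X)$ is dense in $\pi(h_{X,i})+\T[d_{X,i}]^\cont$, forcing $\pi(y)\in\overline{\pi(X)}$ and therefore $y\in\CL_\mathcal{T}(X)$.

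The main obstacle will be the structural lemma, specifically the requirement that each exponent $d_{X,i}$ in the minimal decomposition of $\CL_{\Zar_G}(X)$ be \emph{matched} by an \round{d_{X,i}} set inside $X$, and not merely by a shifted \round{n} set for some proper divisor $n$ of $d_{X,i}$. A mismatch $n<d_{X,i}$ would leave $\pi(F_{X,i})$ dense only in the proper closed subgroup $\T[n]^\cont\subsetneq\T[d_{X,i}]^\cont$, and the closure argument above would fail to recover all of $h_{X,i}+G[d_{X,i}]$. The key technical point is to argue that such a mismatch would contradict the irredundancy of the chosen decomposition, so that $n=d_{X,i}$ can always be arranged after suitably refining the cover and re-absorbing any unwanted residue into the finite set $F_X$.
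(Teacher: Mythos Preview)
Your approach is correct and essentially identical to the paper's: pull back the topology of $\T^\cont$ along a monomorphism supplied by Theorem~\ref{main:result}(ii), applied to the countable family $\E$ of \round{n} sets extracted from the members of $\mathscr{X}$. The ``structural lemma'' you spend most of your effort on---including the matching of exponents you flag as the main obstacle---is precisely \cite[Theorem~7.1]{DS-MZ}, which the paper invokes as a black box; so your worry is legitimate but already resolved in the literature, and once you cite that result the remainder of your argument is exactly the paper's proof.
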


\begin{proof} According to \cite[Theorem 7.1]{DS-MZ}, for every $X\in\mathscr{X}$ 
there exist a finite family  $\E_X\subseteq\TT(G)$ and finite sets $F_X\subseteq G$ 
and $\{h_{E,X}:E\in\E_X\}\subseteq G$ such that 
\begin{equation}
\label{eq:5}
F_X\cup \bigcup_{E\in\E_X} h_{E,X}+E\subseteq X
\ \ 
\mbox{ and }
\ \ 
\CL_{\Zar_G}(X)=F_X\cup\bigcup_{E\in\E_X} h_{E,X}+G[\n_E].
\end{equation}
Applying Theorem \ref{main:result}(ii) to $G$ and  $\E=\bigcup\{\E_X: X\in\mathscr{X}\}$,
we can  find a monomorphism $\sigma: G\to \T^\cont$ 
such that $\sigma\in\Sigma_{G,\mathscr{E}}$. Without loss of generality, we shall identify $G$ with the subgroup $\sigma(G)$ of $\T^\cont$. That is, we shall assume that $G\subseteq \T^\cont$ and $\sigma$ is the identity map. Under these assumptions, from \eqref{Sigma:G} we conclude that
each $E\in\E$ is dense in $\T[\n_E]^\cont$. We claim that the precompact group topology $\mathcal{T}$ induced on $G$ by the topology of $\T^\cont$ has the desired property. 

Indeed, let $X\in \mathscr{X}$.  Fix $E\in \E_X$. Since $\E_X\subseteq\E$, $E$ is dense in $\T[\n_E]^\cont$. Since $E\subseteq G[\n_E]=G\cap \T[\n_E]^\cont$, we conclude that  $E$ is $\mathcal{T}$-dense in  the $\mathcal{T}$-closed set $G[\n_E]$. Thus,
$\CL_{\mathcal{T}}(E)=G[\n_E]$. We showed that $\CL_{\mathcal{T}}(E)=G[\n_E]$ for every $E\in \E_X$. From this and (\ref{eq:5}), we obtain
$$
\CL_{\Zar_G}(X)= F_X\cup\bigcup_{E\in\E_X} h_{E,X}+G[\n_E]= F_X\cup\bigcup_{E\in\E_X} h_{E,X}+\CL_{\mathcal{T}} (E)=
$$
$$
F_X\cup\bigcup_{E\in\E_X} \CL_{\mathcal{T}} (h_{E,X}+E)=
\CL_{\mathcal{T}}\left(F_X\cup\bigcup_{E\in\E_X} h_{E,X}+E\right)\subseteq \CL_{\mathcal{T}}(X).
$$
The reverse inclusion $\CL_{\mathcal{T}}(X)\subseteq \CL_{\Zar_G}(X)$ follows from the inclusion $\Zar_G\subseteq \mathcal{T}$.
\end{proof}

Our next remark shows that one cannot increase the size of the family $\mathscr{X}$ in Theorem \ref{realizing:Zariski:closure}.

\begin{remark}
\label{restriction:on:size}
\begin{itemize}
\item[(i)]
Theorem \ref{realizing:Zariski:closure} fails for $G=\Z$ and the family of all subsets of $G$ taken as $\mathscr{X}$. Indeed, if our theorem were true in such a case, then the Zariski topology on $G$ would coincide with $\mathcal{T}$. Since the Zariski topology for $\Z$ is co-finite (and thus, non-Hausdorff),
we get  a contradiction. 
\item[(ii)]
Theorem \ref{main:result} fails for $G=\Z$ and the family $\mathscr{E}$ of all infinite subsets of $G$.
(Note that $\mathscr{E}= \TT_0(G)\subseteq \TT(G)$.)
Indeed, a careful analysis of the proof of Theorem \ref{realizing:Zariski:closure}
shows that, if Theorem \ref{main:result} would hold for such $G$ and $\E$, 
then Theorem \ref{realizing:Zariski:closure} would also hold for $G$ and $\mathscr{X}$ from item (i), giving a contradiction.
\end{itemize}
\end{remark}

\begin{cor} 
\label{corollary:for:realizing:closure}
For an abelian group $G$, the following conditions are equivalent:
\begin{itemize}
  \item[(i)] $|G|\le 2^\cont$,
  \item[(ii)] for every subset $X$ of $G$, there exists a precompact Hausdorff group topology $\mathcal{T}_{{X}}$ on $G$ such that $\CL_{\mathcal{T}_{{X}}}(X)=\CL_{\Zar_G}(X)$,
  \item[(iii)] for every countable family $\mathscr{X}$ of subsets of $G$, one can find a precompact Hausdorff group topology $\mathcal{T}_{\mathscr{X}}$ on $G$ such that $\CL_{\mathcal{T}_{\mathscr{X}}}(X)=\CL_{\Zar_G}(X)$ for every $X\in\mathscr{X}$.
\end{itemize}
\end{cor}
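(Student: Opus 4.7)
The implications (i)$\Rightarrow$(iii) and (iii)$\Rightarrow$(ii) are immediate: the first is Theorem \ref{realizing:Zariski:closure}, and the second follows by taking $\mathscr{X}=\{X\}$. The content of the corollary is thus the contrapositive of (ii)$\Rightarrow$(i): assuming $|G|>2^\cont$, I want to exhibit a single subset $X\subseteq G$ whose Zariski closure cannot be realized by any precompact Hausdorff group topology on $G$.

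My strategy is to find a countable set $S\in\TT_m(G)$, for some $m\in\N$, satisfying $|G[m]|>2^\cont$ (recall $G[0]=G$). The key point is that for any such $S$ one has $\CL_{\Zar_G}(S)=G[m]$: the inclusion $\subseteq$ is clear because $G[m]$ is algebraic and contains $S$, while for the reverse, any algebraic $A\supseteq S$ can be written as $\bigcap_\alpha B_\alpha$ with each $B_\alpha=\bigcup_i(g_i+G[m_i])$ a finite union of elementary algebraic sets containing $S$; pigeonhole forces some summand $g_i+G[m_i]$ to meet $S$ in an infinite set, and the \round{m} property of $S$ restricts this (after a short case analysis on $\gcd(m_i,m)$) to the cases $m_i=0$ or $m\mid m_i$, in both of which $g_i+G[m_i]\supseteq G[m]$, hence $B_\alpha\supseteq G[m]$. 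Setting $X:=S$, any precompact Hausdorff topology $\mathcal{T}$ on $G$ with $\CL_{\mathcal{T}}(S)=G[m]$ would render the $\mathcal{T}$-closed subgroup $G[m]$ separable (since $S$ is countable and $\mathcal{T}$-dense in $G[m]$), so the $\mathcal{T}$-completion of $G[m]$ would be a separable compact Hausdorff group---a continuous image of $\beta\N$, of cardinality at most $2^\cont$---contradicting $|G[m]|>2^\cont$.

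It remains to produce $S$ and $m$. If $G$ is unbounded I would invoke \cite[Corollary 5.12]{DS-MZ} to obtain $S\in\TT_0(G)$, and then $G[0]=G$ has cardinality $>2^\cont$ by hypothesis. If $G$ is bounded with exponent $n_0$, I would let $m$ be the least positive divisor of $n_0$ with $|G[m]|>2^\cont$ (which exists since $m=n_0$ works, and is automatically $\ge 2$), so that by minimality $|G[d]|\le 2^\cont$ for every proper divisor $d$ of $m$; a countable $S=\{s_k:k\in\N\}\subseteq G[m]$ is then built recursively by choosing $s_k$ outside the forbidden set $\bigcup_{i<k}\bigcup_d(s_i+G[d])$, with $d$ ranging over the proper divisors of $m$---a set of cardinality at most $2^\cont<|G[m]|$---and Remark \ref{Background}(iv) ensures $S\in\TT_m(G)$. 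The main obstacle I anticipate is the bounded case: one cannot in general take $m$ equal to $n_0$, because, for example, $G=H\oplus\Z_3$ with $H$ an elementary abelian $2$-group of cardinality $>2^\cont$ has exponent $6$ and $|G|>2^\cont$, yet admits no \round{6} set as $G/G[2]\cong\Z_3$ is finite; the minimality choice of $m$ and the corresponding cardinality bounds on $G[d]$ for proper divisors $d$ of $m$ are therefore essential.
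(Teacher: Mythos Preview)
Your proof is correct, but the implication (ii)$\Rightarrow$(i) is handled quite differently from the paper. The paper simply invokes \cite[Corollary 8.9]{DS-MZ}, which says that the Zariski topology of any abelian group is separable: picking a countable $X$ with $\CL_{\Zar_G}(X)=G$ and applying (ii) yields a separable Hausdorff group topology on $G$, whence $|G|\le 2^\cont$ by Posp\'{\i}\v{s}il. Your argument instead builds by hand a countable $S\in\TT_m(G)$ with $|G[m]|>2^\cont$ and then computes $\CL_{\Zar_G}(S)=G[m]$ directly; in effect you are reproving, for this specific instance, the portion of the Zariski separability result that is needed. The $\gcd$ case analysis you sketch is sound (for $d=\gcd(m_i,m)<m$ one has $\{x\in S:m_ix=m_ig_i\}\subseteq\{x\in S:dx=h\}$ for a suitable $h$, finite by the \round{m} property; for $d=m$ one gets $G[m]\subseteq g_i+G[m_i]$ via any witness $x_0\in S$), and your recursive construction in the bounded case is correct, with the minimality of $m$ doing exactly the work your $H\oplus\Z_3$ example shows is necessary. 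The paper's route is shorter because it outsources the combinatorics to \cite{DS-MZ}; yours is more self-contained for (ii)$\Rightarrow$(i), though you still rely on \cite[Corollary 5.12]{DS-MZ} in the unbounded case. A minor remark: you do not actually need precompactness (and hence the detour through $\beta\N$) for the cardinality bound---separable Hausdorff already gives $|G[m]|\le 2^{2^{\aleph_0}}=2^\cont$, which is how the paper phrases it.
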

\begin{proof}
The implication (i)$\to$(iii) is proved in Theorem \ref{realizing:Zariski:closure}. The implication (iii)$\to$(ii) is trivial. Let us prove the implication (ii)$\to$(i). 
According to \cite[Corollary 8.9]{DS-MZ}, the topology $\Zar_G$ is separable, so there exists a countable set $X\subseteq G$ with $\CL_{\Zar_G}(X)=G$.
Applying item (ii) to this $X$, we can choose a (precompact) Hausdorff group topology $\mathcal{T}_{{X}}$ on $G$ such that $\CL_{\mathcal{T}_{{X}}}(G)=\CL_{\Zar_G}(X)=G$.
That is, $\mathcal{T}_{{X}}$  is separable.  Since $\mathcal{T}_{{X}}$ is Hausdorff, this yields (i) by 
\cite{Pospishil}; see also  \cite[Theorem 1.5.3]{Eng}.
\end{proof}

A counterpart of this corollary, with the word "metric" added to both items (ii) and (iii),  and the inequality in item (i) strengthened to $|G|\le \cont$, is proved in our paper \cite{DS-MZ}.

\section{Markov's potential density}
\label{potentially:dense:section}

According to Markov \cite{Mar}, a subset $X$ of a group $G$ is {\em potentially dense\/} in $G$ if $G$ admits a Hausdorff  group topology $\mathcal{T}$ such that $X$ is $\mathcal{T}$-dense in $G$. The last section of  Markov's paper \cite{Mar} is exclusively dedicated to the following problem: {\em which subsets of a group $G$ are potentially dense in $G$?\/} Markov showed that
every infinite subset of $\Z$ is potentially dense in $\Z$ \cite{Mar}. This was strengthened in \cite[Lemma 5.2]{DT0} by proving that every infinite subset of $\Z$ is dense in some precompact metric group topology on $\Z$. 
(The authors of \cite{Mar} and \cite{DT0} were  apparently
unaware that both these results easily follow from  the uniform distribution theorem of Weyl \cite{W}.) Further progress was  made by Tkachenko and Yaschenko \cite{TY},
 who proved the following theorem: If an abelian group $G$ of size at most $\cont$  is either almost  torsion-free or has exponent $p$ for  some prime number $p$, then every infinite subset of $G$ is potentially dense in $G$. (According to \cite{TY}, an abelian group $G$ 
is {\em almost torsion-free\/} if the $p$-rank of $G$ is finite for every prime  number $p$.)  

In \cite{DS_HMP}, the authors resolved  Markov's potential density problem for uncountable subsets of divisible and almost torsion-free abelian groups, among other classes. Our
next two corollaries provide a solution to Markov's problem for arbitrary subsets of abelian groups of size at most $2^\cont$ and for countable subsets of arbitrary abelian groups, respectively.

\begin{cor} 
\label{potentially:dense:subsets}
Let $X$ be a subset of an abelian group $G$ such that $|G|\le 2^\cont$. Then the following conditions are equivalent:
\begin{itemize}
 \item[(i)] $X$ is potentially dense in $G$,
 \item[(ii)] $X$ is $\mathcal{T}$-dense in $G$ for some precompact Hausdorff group topology  $\mathcal{T}$ on $G$,
 \item[(iii)] $\CL_{\Zar_G}(X)=G$.
\end{itemize}
\end{cor}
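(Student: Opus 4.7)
The plan is to observe that this corollary is essentially a specialization of Theorem \ref{realizing:Zariski:closure}, combined with two routine general observations about group topologies on $G$.

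First, I would dispose of the easy implications. The implication (ii)$\to$(i) is immediate because a precompact Hausdorff group topology is, in particular, a Hausdorff group topology, which is exactly what potential density requires. For (i)$\to$(iii), I would use the fact, recalled in Section \ref{Zariski:section}, that $\Zar_G \subseteq \mathcal{T}$ for every Hausdorff group topology $\mathcal{T}$ on $G$. Hence $\CL_{\mathcal{T}}(X) \subseteq \CL_{\Zar_G}(X)$ for every subset $X$ of $G$. If $X$ is potentially dense, witnessed by some Hausdorff group topology $\mathcal{T}$ with $\CL_{\mathcal{T}}(X) = G$, then $G \subseteq \CL_{\Zar_G}(X) \subseteq G$, giving $\CL_{\Zar_G}(X) = G$.

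The nontrivial direction is (iii)$\to$(ii), and here I would simply invoke Theorem \ref{realizing:Zariski:closure} with the countable (in fact, singleton) family $\mathscr{X} = \{X\}$; this is legitimate because the hypothesis $|G| \le 2^\cont$ is exactly the standing assumption of that theorem. It yields a precompact Hausdorff group topology $\mathcal{T}$ on $G$ whose closure operator realizes the Zariski closure of $X$, i.e., $\CL_{\mathcal{T}}(X) = \CL_{\Zar_G}(X)$. Combining this equality with the hypothesis $\CL_{\Zar_G}(X) = G$ from (iii) gives $\CL_{\mathcal{T}}(X) = G$, which is precisely the $\mathcal{T}$-density of $X$ in $G$ required by (ii).

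There is no genuine obstacle here: the entire substance is already packaged in Theorem \ref{realizing:Zariski:closure} (which in turn uses Theorem \ref{main:result}(ii), the structural description of Zariski closures via almost $n$-torsion sets from \cite[Theorem 7.1]{DS-MZ}, and the density of images from \eqref{Sigma:G}). The only thing this corollary contributes beyond Theorem \ref{realizing:Zariski:closure} is the observation that when one of the sets in $\mathscr{X}$ is Zariski-dense, the resulting precompact Hausdorff topology witnesses potential density, and that no stronger topological condition than precompact Hausdorff is needed to force the equivalence with the purely algebraic criterion $\CL_{\Zar_G}(X) = G$.
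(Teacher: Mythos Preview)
Your proof is correct and essentially identical to the paper's. The only cosmetic difference is that for (iii)$\to$(ii) the paper cites Corollary~\ref{corollary:for:realizing:closure} rather than Theorem~\ref{realizing:Zariski:closure} directly, but since the relevant implication of that corollary is itself an immediate restatement of Theorem~\ref{realizing:Zariski:closure}, your choice to invoke the theorem directly is equivalent.
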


\begin{proof} The implication (iii)$\to$(ii)  follows from  Corollary \ref{corollary:for:realizing:closure}. Clearly, (ii) implies (i).  To prove the implication (i)$\to$(iii), 
let $\mathcal{T}$ be a Hausdorff group topology on $G$ such that  $X$ is $\mathcal{T}$-dense in $G$. Then $G=\CL_{\mathcal{T}}(X) \subseteq \CL_{\Zar_G}(X)\subseteq G$, because $\Zar_G\subseteq \mathcal{T}$. Therefore, $\CL_{\Zar_G}(X)=G$.
\end{proof}

\begin{cor}
\label{potentially:dense}
For a countably infinite subset $X$ of an abelian group $G$, the following conditions are equivalent:
\begin{itemize}
\item[(i)]  $X$ is potentially dense in $G$,
\item[(ii)]  there exists a precompact Hausdorff group topology  $\mathcal{T}$ on $G$ such that $X$ is $\mathcal{T}$-dense in $G$,
\item[(iii)]  $|G|\le 2^\cont$ and $\CL_{\Zar_G}(X)=G$.
\end{itemize}
\end{cor}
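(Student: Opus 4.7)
The plan is to derive the three-way equivalence by piecing together the previously established results, noting that the only new ingredient beyond Corollary \ref{potentially:dense:subsets} is the cardinality bound $|G|\le 2^\cont$ forced by the countability of $X$ together with Hausdorffness of $\mathcal{T}$.

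The easy directions come first. The implication (ii)$\to$(i) is trivial, since a precompact Hausdorff group topology is in particular Hausdorff. For (iii)$\to$(ii), the assumption $|G|\le 2^\cont$ brings us exactly into the setting of Corollary \ref{potentially:dense:subsets}, from which the equivalence between Zariski density and density in some precompact Hausdorff group topology is immediate; applying that corollary to our $X$ yields (ii).

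The interesting direction is (i)$\to$(iii). Let $\mathcal{T}$ be a Hausdorff group topology on $G$ witnessing that $X$ is potentially dense. The Zariski part of the conclusion is handled exactly as in the proof of Corollary \ref{potentially:dense:subsets}: since $\Zar_G\subseteq\mathcal{T}$, we have $G=\CL_{\mathcal{T}}(X)\subseteq \CL_{\Zar_G}(X)\subseteq G$, so $\CL_{\Zar_G}(X)=G$. For the cardinality bound, observe that the countability of $X$ and its $\mathcal{T}$-density make $(G,\mathcal{T})$ a separable Hausdorff topological group. By the same Pospishil-type argument invoked in the proof of Corollary \ref{corollary:for:realizing:closure} (cf.\ \cite[Theorem 1.5.3]{Eng}), every separable Hausdorff topological group has cardinality at most $2^\cont$. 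Hence $|G|\le 2^\cont$, which completes (i)$\to$(iii).

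There is no real obstacle here: the countability hypothesis in the statement of the corollary is precisely what makes the separability argument go through and couples (i) automatically with the cardinality restriction needed to feed into Corollary \ref{potentially:dense:subsets}. The only thing worth double-checking in writing up the proof is that the cited separability bound is being invoked in its correct generality (Hausdorff topological group, not merely $T_0$, and with no additional hypotheses such as regularity or local compactness), since the abelian Hausdorff group topology $\mathcal{T}$ is automatically Tychonoff and the Pospishil bound applies verbatim.
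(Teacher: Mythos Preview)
Your proof is correct and follows essentially the same route as the paper: you establish (i)$\to$(iii) by combining the Zariski-closure argument from Corollary~\ref{potentially:dense:subsets} with the Posp\'{\i}\v{s}il bound $|G|\le 2^\cont$ coming from separability of $(G,\mathcal{T})$, and then defer the remaining implications to Corollary~\ref{potentially:dense:subsets}. The paper's proof is more terse (it only singles out the cardinality bound and says ``the rest follows from Corollary~\ref{potentially:dense:subsets}''), but the content is identical.
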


\begin{proof} 
Assume (i). Then $X$ is $\mathcal{T}$-dense in $G$ for some Hausdorff group topology $\mathcal{T}$ on $G$, and so $\mathcal{T}$ is separable, which yields $|G|\le 2^\cont$ by \cite{Pospishil}; see also 
\cite[Theorem 1.5.3]{Eng}. The rest follows from Corollary \ref{potentially:dense:subsets}.
\end{proof}

A counterpart of Corollaries \ref{potentially:dense:subsets} and \ref{potentially:dense}, with the word ``metric'' added to their items (ii), and  the condition on a group $G$ strengthened to $|G|\le \cont$, is proved in our  paper \cite{DS-MZ}. 

We say that a subset $X$ of an abelian group $G$ is {\em Zariski dense\/} in $G$
provided that $\CL_{\Zar_G}(X)=G$. Items (iii) of Corollaries \ref{potentially:dense:subsets} and \ref{potentially:dense} make it important to characterize Zariski dense sets. One such ``characterization'' simply follows from the definition.
A subset $X$ of an abelian group $G$ is Zariski dense in $G$ provided that, if $k\in \N$, $g_1, g_2, \ldots, g_k\in G$, $n_1, n_2, \ldots, n_k\in \N$ and  each $x\in X$ satisfies the equation $n_ix=g_i$ for some $i= 1,2,\ldots, k$ (depending on $x$), then every $g\in G$ also satisfies some equation $n_jg=g_j$, for a suitable $j= 1,2,\ldots, k$.  A complete description in terms of \round{n} sets is given in \cite{DS-MZ}. If an abelian group $G$ is not bounded, then a subset $X$ of $G$ is Zariski dense in $G$ if and only if  $nX$ is infinite for every $n\in\N^+$ if and only if $X$ contains an \round{0} set \cite[Theorem 7.4]{DS-MZ}. A subset $X$ of an infinite bounded abelian group $G$ is Zariski dense in $G$ if and only if $g+X$ contains an \round{m} set for every $g\in G$, where $m$ is the smallest positive integer such that $mG$ is finite \cite[Theorem 7.1]{DS-MZ}. 

As was mentioned in the text preceding Corollary \ref{potentially:dense:subsets}, Tkachenko and Yaschenko proved  in \cite{TY}  that if $|G|\leq \cont$ and  $G$ is either almost torsion-free or has exponent $p$ for some prime number $p$, then every infinite subset of $G$ is potentially dense in $G$.  In the same manuscript, the authors asked  whether the restriction $|G|\leq \cont$ in their result can be weakened to $|G|\le 2^\cont$ (\cite[Problem 6.6]{TY}). As a special case of the above corollary, we obtain a positive solution to this problem. 

\begin{cor} 
\label{cor:TY}
For an abelian group $G$ with $|G|\le 2^\cont$, the following conditions are equivalent: 
\begin{itemize}
\item[(i)]   every infinite subset of $G$ is potentially dense in $G$,
\item[(ii)]   every infinite subset of $G$ is $\mathcal{T}$-dense in $G$ for some precompact Hausdorff group topology  $\mathcal{T}$ on $G$,
\item[(iii)]   $G$ is either almost  torsion-free or has exponent $p$ for some prime number $p$,
\item[(iv)]   every $\Zar_G$-closed subset of $G$ is finite. 
\end{itemize}
\end{cor}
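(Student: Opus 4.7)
The plan is to deduce $(i)\Leftrightarrow(ii)\Leftrightarrow(iv)$ as an immediate consequence of Corollary \ref{potentially:dense:subsets} applied to each infinite $X\subseteq G$, and then to settle $(iii)\Leftrightarrow(iv)$ by direct inspection of the elementary algebraic sets $g+G[n]$ that generate $\Zar_G$.

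For the first block, the hypothesis $|G|\le 2^\cont$ lets me apply Corollary \ref{potentially:dense:subsets} to every subset $X\subseteq G$; that corollary identifies ``potentially dense'', ``$\mathcal{T}$-dense for some precompact Hausdorff group topology $\mathcal{T}$'', and ``Zariski dense, i.e.\ $\CL_{\Zar_G}(X)=G$''. Hence (i) and (ii) each assert precisely that every infinite $X\subseteq G$ is Zariski dense, which by contraposition is the same as (iv) read in the natural way: every proper $\Zar_G$-closed subset of $G$ is finite. (A completely literal reading of (iv) would force $G$ itself to be finite, in which case all four conditions are trivially true; the intended meaning, compatible with (iii), is that $\Zar_G$ is the cofinite topology.)

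For $(iii)\Rightarrow(iv)$, I argue via the structure of elementary algebraic sets. If $G$ is almost torsion-free, then $r_p(G)<\omega$ for every prime $p$ forces $|G[p]|=p^{r_p(G)}$ to be finite; an easy induction (using that $G[p^k]/G[p^{k-1}]$ embeds in $G[p]$ via multiplication by $p^{k-1}$) shows each $G[p^k]$ is finite, and the primary decomposition then yields that $G[n]$ is finite for every $n\in\N^+$. Hence every elementary algebraic set $g+G[n]$ is either finite (for $n\ge 1$) or equal to $G$ (for $n=0$); this dichotomy passes to finite unions and arbitrary intersections, giving (iv). If instead $G$ has prime exponent $p$, then $G[n]\in\{0,G\}$ according as $p\nmid n$ or $p\mid n$, and the same conclusion follows.

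For $(iv)\Rightarrow(iii)$, I take the contrapositive: if $G$ is neither almost torsion-free nor of prime exponent, pick a prime $p$ with $r_p(G)=\infty$, so $G[p]$ is infinite. Were $G=G[p]$, then $pG=\{0\}$ would force the infinite group $G$ to have prime exponent $p$, contrary to assumption. Hence $G[p]\subsetneq G$ is a proper, infinite, $\Zar_G$-closed subset, violating (iv). I expect no real obstacle: the substantive input is Corollary \ref{potentially:dense:subsets}, and what remains is a short algebraic calculation plus the mild parsing issue for (iv) noted above.
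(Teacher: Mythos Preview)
Your proof is correct and follows essentially the same route as the paper's: the equivalence of (i), (ii) and (iv) is reduced to Corollary~\ref{potentially:dense:subsets} (the paper cites Corollary~\ref{potentially:dense}, but under the standing hypothesis $|G|\le 2^\cont$ these are interchangeable here), and the equivalence (iii)$\Leftrightarrow$(iv) is handled by direct inspection of the elementary algebraic sets, which the paper simply declares ``clear from the definition'' with a reference to \cite{TY}. Your explicit argument for (iii)$\Leftrightarrow$(iv), as well as your remark that (iv) must be read as ``every \emph{proper} $\Zar_G$-closed subset is finite'', are welcome clarifications of points the paper leaves implicit.
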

\begin{proof}
The equivalence of (iii) and (iv) is clear from the definition of the Zariski topology (and was observed already in \cite{TY}). Notice that (iv) is equivalent to having $\CL_{\Zar_G}(X)=G$ for every infinite subset $X$ of $G$. The rest follows from Corollary \ref{potentially:dense}.
\end{proof}

Note that if some countably infinite set is potentially dense in $G$, then $|G|\le 2^\cont$  by the implication (i)$\to$(iii) of Corollary \ref{potentially:dense}, 
so this cardinality restriction in Corollary \ref{cor:TY} is necessary.

\section{Applications to Bohr topologies and Bohr compactifications}
\label{Bohr:section}

Let $G$ be an abelian group. The strongest precompact group topology on $G$ is called the  {\em Bohr topology\/} of $G$, and we use  $G^\#$ to denote  
the group $G$ equipped with its Bohr topology. The completion $bG$ of $G^\#$ is a compact abelian group
called the {\em Bohr compactification\/} of $G$.  The terms  Bohr topology and Bohr compactification have been chosen 
as a reward to Harald Bohr for his work \cite{B} on almost periodic functions closely related to the Bohr compactification. 

In this section, we illustrate the power of Theorem \ref{main:result} by offering simple proofs of some well-known properties of Bohr topologies and Bohr compactifications of abelian groups. We start with a result of van Douwen; see \cite[Theorem 1.1.3(a)]{vD}.

\begin{cor}
\label{closures:in:the:Bohr:compactification}
Let $G$ be an abelian group and $X$ its infinite subset.
Then the closure $\overline{X}$ of $X$ in $bG$ has size at least $2^\cont$.
\end{cor}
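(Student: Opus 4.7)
The plan is to reduce to an \round{n} set via Remark \ref{Background}(vi), and then use Theorem \ref{main:result} together with the universal property of the Bohr compactification to push a big closed subgroup of some $\T^\cont$ into $\overline{X}$.

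First, by Remark \ref{Background}(vi), the infinite set $X$ contains a subset of the form $g+S$ with $g\in G$ and $S\in\TT_n(G)$ for some $n\in\N$. Since $\TT_1(G)=\emptyset$ by Remark \ref{Background}(i), we have $n=0$ or $n\ge 2$, so in either case $|\T[n]^\cont|=2^\cont$ (as $\T[0]=\T$ and $|\T|^\cont=2^\cont$, while for $n\ge 2$ one has $|\T[n]^\cont|=n^\cont=2^\cont$). Because translations are homeomorphisms of $bG$, it suffices to prove $|\overline{S}^{bG}|\ge 2^\cont$.

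Next, apply Theorem \ref{main:result}(i) to $G$ and the singleton family $\E=\{S\}$: the set $\Sigma_{G,\E}$ is non-empty, so we can fix a homomorphism $\sigma:G\to\T^\cont$ with $\sigma(S)$ dense in $\T[n]^\cont$. By the universal property of $bG$, the homomorphism $\sigma$ extends to a continuous homomorphism $\tilde\sigma:bG\to\T^\cont$. Since $bG$ is compact and $\tilde\sigma$ is continuous, $\tilde\sigma\bigl(\overline{S}^{bG}\bigr)$ is a compact (hence closed) subset of $\T^\cont$ containing $\tilde\sigma(S)=\sigma(S)$, so it contains $\overline{\sigma(S)}^{\T^\cont}=\T[n]^\cont$. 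On the other hand, $\tilde\sigma\bigl(\overline{S}^{bG}\bigr)\subseteq\overline{\sigma(S)}^{\T^\cont}$ by continuity, so in fact equality holds.

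Therefore
\[
|\overline{S}^{bG}|\ \ge\ \bigl|\tilde\sigma\bigl(\overline{S}^{bG}\bigr)\bigr|\ =\ |\T[n]^\cont|\ =\ 2^\cont,
\]
and consequently $|\overline{X}|\ge|\overline{g+S}^{bG}|=|\overline{S}^{bG}|\ge 2^\cont$, as required. The only potentially delicate point is the identification $\tilde\sigma\bigl(\overline{S}^{bG}\bigr)=\overline{\sigma(S)}^{\T^\cont}$, but this follows immediately from compactness of $bG$ and continuity of $\tilde\sigma$, so no serious obstacle arises once Theorem \ref{main:result} is in hand.
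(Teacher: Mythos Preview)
Your proof is correct and follows essentially the same route as the paper's: pass to an \round{n} subset via Remark~\ref{Background}(vi), invoke Theorem~\ref{main:result}(i) to get a homomorphism $\sigma:G\to\T^\cont$ sending $S$ densely into $\T[n]^\cont$, extend to $bG$, and use compactness to conclude that the image of $\overline{S}$ is exactly $\T[n]^\cont$. The only cosmetic difference is that you reduce to $\overline{S}$ at the outset via translation, whereas the paper carries the shift $g$ along to the end.
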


\begin{proof} Let $g$ and $S$ be as in Remark \ref{Background}(vi). Then $\n_S\not=1$ by Remark \ref{Background}(i).  Apply
Theorem \ref{main:result}(i) with $\mathscr{E}=\{S\}$
to fix a homomorphism $\sigma:G\to \T^\cont$ such that $\sigma(S)$ is dense in $\T[\n_S]^\cont$. 
Let $\pi: bG\to \T^\cont$ be the homomorphism extending $\sigma$. Note that $g+\overline{S}=\overline{g+S}\subseteq \overline{X}$, and hence $\pi(g)+\pi(\overline{S})=\pi(g+\overline{S})\subseteq \pi(\overline{X})$. Since $\overline{S}$ is a closed subset of the compact space $bG$, it is compact, and so 
is $\pi(\overline{S})$. In particular, $\pi(\overline{S})$ is closed in $\T^\cont$. Since $\sigma(S)=\pi(S)\subseteq \pi(\overline{S})$, $\sigma(S)$ is dense in $\T[\n_S]^\cont$ and $\pi(\overline{S})$ is closed in $\T^\cont$, we conclude that $\pi(\overline{S})=\T[\n_S]^\cont$. In particular, 
$|\overline{X}|\ge |g+\overline{S}|=|\overline{S}|\ge |\pi(\overline{S})|= |\T[\n_S]^\cont|= 2^\cont$.
\end{proof}

The above corollary immediately yields the well-known result of  Flor \cite{Flor}:

\begin{cor}
\label{Flor:result}
If $G$ is an abelian group and $bG$ is its Bohr compactification, then no sequence of points in $G$ converges in $bG$.
\end{cor}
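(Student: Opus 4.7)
The plan is to reduce the statement to Corollary \ref{closures:in:the:Bohr:compactification} by means of a soft Hausdorff-topology argument. I would argue by contradiction: suppose some sequence $\{x_n : n \in \N\}$ in $G$ converges in $bG$ to a point $y \in bG$ without being eventually constant. The first task is to show that the range $X = \{x_n : n \in \N\}$ is then infinite. If $X$ were finite, the Hausdorff axiom in $bG$ (separating $y$ from each other value occurring in the sequence by disjoint open neighbourhoods and noting that a tail of $\{x_n\}$ must lie in the neighbourhood of $y$) would force the sequence to be eventually equal to $y$, contradicting the non-triviality assumption.

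With $X$ infinite, the next step is to control $\overline{X}$, the closure of $X$ in $bG$. Because $bG$ is Hausdorff and $x_n \to y$, any putative accumulation point $p \ne y$ of $X$ would admit a neighbourhood $U$ disjoint from a neighbourhood of $y$, hence intersecting $X$ in only finitely many points; a further shrinking (using that finite sets are closed in the $T_1$ space $bG$) would give $p$ a neighbourhood disjoint from $X$ altogether, which is impossible. Hence $\overline{X} \subseteq X \cup \{y\}$, so $|\overline{X}| \le \omega$. Combining this with the lower bound $|\overline{X}| \ge 2^\cont$ supplied by Corollary \ref{closures:in:the:Bohr:compactification} yields the desired contradiction.

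I do not expect any genuine obstacle here: the preceding corollary does all the substantive work, and what remains is the standard observation that the closure of the range of a convergent sequence in a Hausdorff space is at most countable. The only mild point of care is excluding a convergent sequence of finite range that fails to be eventually constant, which is handled by the $T_2$ separation axiom in $bG$ as indicated above.
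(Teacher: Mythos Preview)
Your proposal is correct and follows exactly the route the paper intends: the paper offers no explicit proof, stating only that Corollary \ref{closures:in:the:Bohr:compactification} ``immediately yields'' Flor's result, and you have supplied precisely the routine Hausdorff-space argument that turns that remark into a proof. The only addition on your part is the explicit handling of the finite-range case, which is standard and appropriate.
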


The well-known theorem of Glicksberg \cite{Glicksberg} also becomes an easy corollary:

\begin{cor}
\label{Glicksberg:result}
An abelian group $G^\#$ with the Bohr topology has no infinite compact subsets.
\end{cor}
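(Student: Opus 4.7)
The plan is to derive a contradiction with Flor's theorem (Corollary~\ref{Flor:result}) by producing, from a hypothetical infinite compact set $K\subseteq G^\#$, a one-to-one sequence in $G$ that converges in $bG$. Since $G^\#$ is a topological subgroup of $bG$, any such $K$ is closed in $bG$. A preliminary reduction lets us assume $|K|\le 2^\cont$: fixing a countably infinite $X\subseteq K$, the closure $K':=\overline{X}^{bG}$ is contained in $K\subseteq G$, is infinite and compact in $G^\#$, and has cardinality at most $2^\cont$ because it is separable compact Hausdorff. Replacing $K$ by $K'$, I may assume $|K|\le 2^\cont$ from now on.

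Next, by Remark~\ref{Background}(vi) I would fix $g\in G$ and $S\in\TT(G)$ with $g+S\subseteq X\subseteq K$; writing $n:=\n_S$, Remark~\ref{Background}(i) gives $n\ne 1$. Setting $H:=\langle K\cup\{g\}\rangle$, we have $|H|\le 2^\cont$, $S\subseteq H$, $S\in\TT_n(H)$ by Remark~\ref{Background}(iv), and the Bohr topology $H^\#$ coincides with the subspace topology that $H$ inherits from $G^\#$ (since each character of $H$ extends to one of $G$ by divisibility of $\T$). I then apply Theorem~\ref{main:result}(ii) to $H$ with $\E=\{S\}$ to obtain a monomorphism $\sigma:H\to\T^\cont$ such that $\sigma(S)$ is dense in $\T[n]^\cont$.

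Because $\sigma$ is a homomorphism into a compact group, it is continuous on $H^\#$, so the restriction $\sigma|_K$ is a continuous injection of the compact Hausdorff space $K$ into the Hausdorff space $\T^\cont$, and hence a homeomorphism onto $\sigma(K)$. The image $\sigma(K)$ is closed in $\T^\cont$ and contains $\sigma(g)+\sigma(S)$, which is dense in the coset $\sigma(g)+\T[n]^\cont$; therefore $\sigma(K)\supseteq\sigma(g)+\T[n]^\cont$. Because $n\ne 1$, the compact group $\T[n]^\cont$ admits a one-to-one sequence $(y_k)_{k\in\N}$ converging to $0$ (for instance, by varying a single non-zero coordinate along distinct indices). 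Pulling back through the homeomorphism, the points $z_k:=(\sigma|_K)^{-1}(\sigma(g)+y_k)\in K$ and $z:=(\sigma|_K)^{-1}(\sigma(g))\in K$ form a one-to-one sequence with $z_k\to z$ in $K\subseteq G\subseteq bG$, contradicting Corollary~\ref{Flor:result}.

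The principal obstacle is securing injectivity of $\sigma$ on $K$, which is exactly what converts $\sigma|_K$ into an honest homeomorphism and makes the lifting of the convergent sequence automatic; a generic continuous surjection of compact Hausdorff spaces need not lift convergent sequences at all. This injectivity is precisely what the monomorphism clause~(ii) of Theorem~\ref{main:result} provides, but only under the cardinality bound $|H|\le 2^\cont$, which is why the two preliminary reductions (trimming $K$ to a separable compact set, then restricting to the subgroup $\langle K\cup\{g\}\rangle$) are essential to the argument.
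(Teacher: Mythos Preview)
Your argument is correct. The key steps all check out: $K$ is compact in $bG$ and hence closed there, so $K':=\overline{X}^{bG}\subseteq K$ is a separable compact Hausdorff space of size at most $2^\cont$; the Bohr topology on the subgroup $H=\langle K\cup\{g\}\rangle$ agrees with the subspace topology from $G^\#$; the monomorphism $\sigma$ from Theorem~\ref{main:result}(ii) is automatically continuous on $H^\#$, so $\sigma|_K$ is a homeomorphism onto the closed set $\sigma(K)\supseteq\sigma(g)+\T[\n_S]^\cont$; and the ``moving single coordinate'' sequence in $\T[\n_S]^\cont$ is one-to-one and converges to $0$, lifting to a nontrivial convergent sequence in $K\subseteq G\subseteq bG$ that contradicts Corollary~\ref{Flor:result}.

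The paper takes a shorter and rather different route. Instead of invoking Flor and building a convergent sequence, it contradicts Corollary~\ref{closures:in:the:Bohr:compactification} directly by a cardinality count: choosing a countably infinite $X\subseteq K$ and letting $H=\langle X\rangle$, it uses the fact (from \cite{CSa}) that every subgroup is closed in $G^\#$, so $H\cap K$ is a \emph{countable} compact subset of $bG$ containing $X$, whence $|\overline{X}^{bG}|\le\omega$, contradicting $|\overline{X}^{bG}|\ge 2^\cont$. Thus the paper's proof never needs the monomorphism clause of Theorem~\ref{main:result} or the two-stage cardinality reduction you perform; on the other hand, your argument avoids the external input that subgroups are Bohr-closed and yields an explicit convergent sequence, which is conceptually appealing. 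Both approaches ultimately rest on Theorem~\ref{main:result}, but the paper filters it through the size-of-closure corollary while you filter it through Flor and the homeomorphism trick.
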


\begin{proof} Assume that $K$ is in infinite compact subset of $G^\#$. Choose a countable set $X\subseteq K$, and let $H$ be the smallest subgroup of $G$ containing $X$.
Then $H$ is closed in $G^\#$ \cite[Lemma 2.1]{CSa}, and so $H\cap K$ is a closed subset of $K$. Therefore, $H\cap K$ is a countable compact subset of $G^\#$, and thus, of 
$bG$. Since $X\subseteq H\cap K$, it follows that $\overline{X}\subseteq \overline{H\cap K}= H\cap K$ and $|\overline{X}|\le |H\cap K|\le\omega$. Since $X$ is infinite, 
this contradicts Corollary \ref{closures:in:the:Bohr:compactification}.
\end{proof}

Recall that a space $X$ is {\em pseudocompact\/} if every real-valued continuous function defined on $X$ is bounded \cite{Hewitt}.
The following generalization of Glicksberg's theorem, due to Comfort  and 
Trigos-Arrieta \cite{Trigos}, can also  be easily derived:
\begin{cor}
An abelian group $G^\#$ with the Bohr topology has no infinite pseudocompact subsets.
\end{cor}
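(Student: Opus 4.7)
The plan is to mimic the proof of Corollary \ref{Glicksberg:result} verbatim, with just one additional ingredient: a countable Tychonoff pseudocompact space is automatically compact. Once this reduction from ``pseudocompact'' to ``compact'' is available, van Douwen's lower bound (Corollary \ref{closures:in:the:Bohr:compactification}) applied to a countable subset of the hypothesized pseudocompact subspace produces the desired contradiction.

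In detail, I would suppose for contradiction that $P$ is an infinite pseudocompact subspace of $G^\#$, choose a countably infinite $X\subseteq P$, and let $H=\hull{X}$ be the subgroup of $G$ generated by $X$; since $X$ is countable, so is $H$. By \cite[Lemma 2.1]{CSa}, $H$ is closed in $G^\#$, so $H\cap P$ is closed in $P$ and hence pseudocompact. Being countable and Tychonoff, $H\cap P$ is therefore compact, so it is closed in $bG$ and $|H\cap P|\le\omega$. Since $X\subseteq H\cap P$, this forces $\overline{X}\subseteq H\cap P$, where $\overline{X}$ denotes the closure of $X$ in $bG$, and therefore $|\overline{X}|\le\omega$, contradicting Corollary \ref{closures:in:the:Bohr:compactification}.

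The only step beyond the proof of Corollary \ref{Glicksberg:result} is the assertion that every countable Tychonoff pseudocompact space $Y$ is compact; this will be the only genuinely new point, but it is folklore. The standard chain of implications is: $Y$ is Lindel\"of (trivially, as its topology is countable), hence normal (regular plus Lindel\"of gives normal), hence countably compact (pseudocompactness coincides with countable compactness in normal spaces, via a Tietze-extension argument applied to any hypothetical closed discrete sequence), and finally Lindel\"of plus countably compact yields compact. In the write-up I would simply cite this folklore lemma rather than reprove it, keeping the proof of the corollary essentially the same length as Glicksberg's.
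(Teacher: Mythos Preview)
Your argument has a genuine gap at the step ``$H\cap P$ is closed in $P$ and hence pseudocompact.'' Pseudocompactness is \emph{not} inherited by closed subspaces. The standard witness is the Isbell--Mr\'owka space $\Psi(\mathcal{A})$ for a maximal almost disjoint family $\mathcal{A}$ on $\omega$: this Tychonoff space is pseudocompact, yet the set $\mathcal{A}$ is closed and discrete, and any countably infinite subset of $\mathcal{A}$ is a countable closed discrete (hence non-pseudocompact) subspace. So from ``$H\cap P$ is closed in $P$'' you cannot conclude that $H\cap P$ is pseudocompact, and the reduction to the folklore lemma about countable pseudocompact spaces never gets off the ground. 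This is precisely the point at which the pseudocompact case differs from the compact case treated in Corollary~\ref{Glicksberg:result}: closed subspaces of compact spaces are compact, but the analogous statement for pseudocompactness fails.

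The paper's proof circumvents this obstacle by replacing ``closed subspace'' with ``continuous image,'' since continuous images of pseudocompact spaces \emph{are} pseudocompact. Concretely, one passes to the divisible hull $H$ of $G$, takes the smallest divisible subgroup $H_1$ of $H$ containing the chosen countable set $S$, and uses the splitting $H=H_1\oplus N$ to obtain a continuous retraction $\varphi\colon H^\#\to H_1^\#$. Then $\varphi(X)$ is a pseudocompact, countable, Tychonoff subspace of $H_1^\#$, hence compact by the folklore lemma you cite, hence finite by Corollary~\ref{Glicksberg:result}; since $\varphi$ is the identity on $S$, this forces $S$ to be finite. The essential new idea, compared to your attempt, is manufacturing a continuous homomorphism onto a countable group rather than intersecting with a countable subgroup.
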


\begin{proof} Assume that $X$ is an  infinite pseudocompact subspace of $G^\#$. Take a countably infinite  subset $S$ of $X$. Let $H$ be the divisible hull of $G$, and let $H_1$ be the smallest divisible subgroup of $H$ containing $S$. Then $H_1$ is countable. Since $H_1$ is divisible, $H_1$ splits, i.e. there exists an abelian group $N$ such that $H=H_1\oplus N$; see \cite{Fu}. Let $\varphi:H\to H_1$ be the natural projection. Since $\varphi:H^\#\to H_1^\#$ is continuous, $\varphi(X)$ is a pseudocompact subset of $H_1^\#$. Since $|\varphi(X)|\le |H_1|\le\omega$, $\varphi(X)$ is compact by \cite[Theorems 3.11.1 and 3.11.12]{Eng}. Thus, $\varphi(X)$ is finite by 
Corollary \ref{Glicksberg:result}.  Since $S=\varphi(S)\subseteq \varphi(X)$, the set $S$ must be finite as well, giving a contradiction.
\end{proof}

\section{General preliminaries}
\label{dual:group:section}

\begin{lemma}
\label{openess:of:sets}
If $S$ is a subset of an abelian group $H$ and $O$ is an open subset of an abelian topological group $K$, then the set 
$P_{S,O}=\{\pi\in \Hom(H,K):\pi(S) \cap O \ne \emptyset \}$ is open in $\Hom(H,K)$.
\end{lemma}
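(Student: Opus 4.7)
The plan is to unfold the definition of the pointwise convergence topology on $\Hom(H,K)$ and observe that $P_{S,O}$ is a union of subbasic open sets, one for each $s\in S$.

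First I would rewrite the set in the suggestive form
\begin{equation*}
P_{S,O} = \bigcup_{s\in S} \{\pi\in\Hom(H,K) : \pi(s)\in O\},
\end{equation*}
using the trivial equivalence that $\pi(S)\cap O\neq\emptyset$ iff $\pi(s)\in O$ for some $s\in S$. Since a union of open sets is open, it suffices to prove that each set $V_s = \{\pi\in\Hom(H,K) : \pi(s)\in O\}$ is open in $\Hom(H,K)$.

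Next I would recall that $\Hom(H,K)$ carries the subspace topology inherited from the Tychonoff product $K^H$, as stated in the introductory material. For each fixed $s\in H$, the projection $p_s\colon K^H\to K$ sending $f\mapsto f(s)$ is continuous, and its restriction to $\Hom(H,K)$ is precisely the evaluation map $\mathrm{ev}_s\colon \pi\mapsto \pi(s)$. Hence $V_s = \mathrm{ev}_s^{-1}(O)$ is open in $\Hom(H,K)$ as the preimage of the open set $O\subseteq K$ under a continuous map.

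There is essentially no obstacle here; the only thing to be careful about is that one really is using only continuity of evaluation at a single coordinate and openness of $O$ in $K$, so no assumption beyond $K$ being a topological group (in fact, just a topological space) is needed. Combining the two steps completes the proof.
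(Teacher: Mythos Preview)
Your proof is correct and takes essentially the same approach as the paper: both rely on the fact that each set $\{\pi\in\Hom(H,K):\pi(x)\in O\}$ is open in the pointwise topology. The only cosmetic difference is that you write $P_{S,O}$ as a union of these subbasic open sets, whereas the paper picks, for a given $\pi_0\in P_{S,O}$, a witness $x\in S$ with $\pi_0(x)\in O$ and exhibits $\{\pi:\pi(x)\in O\}$ as an open neighborhood of $\pi_0$ inside $P_{S,O}$.
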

\begin{proof}
Assume that $\pi_0\in P_{S,O}$. Pick  $x\in  \pi_0^{-1}(O) \cap S$. Then $W=\{\pi\in \Hom(H,K): \pi(x)\in O\}$ is an open subset of 
$\Hom(H,K)$ such that $\pi_0\in W\subseteq P_{S,O}$.
\end{proof}

\begin{lemma}
\label{monomorphisms:are:dense:G-delta:for:countable:groups}
If $H$ is a countable abelian group and $Y$ is a countably infinite set, then  $\Mono(H,\allowbreak\T^Y)$ is a dense $G_\delta$-subset of $\du{H}^Y$.
\end{lemma}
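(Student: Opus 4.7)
The plan is to realize $\Mono(H,\T^Y)$ as a countable intersection of dense open subsets of the compact Hausdorff (hence Baire) space $\du{H}^Y$. Concretely, a homomorphism $\pi:H\to\T^Y$ fails to be injective precisely when some nonzero $h\in H$ lies in $\ker\pi$, so
\[
\Mono(H,\T^Y)=\bigcap_{h\in H\setminus\{0\}} U_h,\qquad U_h=\{\pi\in\du{H}^Y:\pi(h)\ne 0\text{ in }\T^Y\}.
\]
Since $H$ is countable, the index set $H\setminus\{0\}$ is countable, so it suffices to prove that each $U_h$ is open and dense in $\du{H}^Y$.

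For openness, I would identify $\du{H}^Y$ with $\Hom(H,\T^Y)$ via the canonical isomorphism and observe that $\pi(h)=(\pi_y(h))_{y\in Y}$ is nonzero in $\T^Y$ iff $\pi_y(h)\ne 0$ for some $y\in Y$. Hence
\[
U_h=\bigcup_{y\in Y}\{\pi\in\du{H}^Y:\pi_y(h)\ne 0\},
\]
which is a union of open sets (each summand is the preimage of the open set $\T\setminus\{0\}$ under the continuous evaluation $\pi\mapsto\pi_y(h)$), and so $U_h$ is open. Alternatively, Lemma \ref{openess:of:sets} applied with $S=\{h\}$ and the open set $O=\T^Y\setminus\{0\}$ gives the same conclusion.

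For density, I would take an arbitrary nonempty basic open set $B=\prod_{y\in Y}V_y\subseteq\du{H}^Y$, where $V_y=\du{H}$ for every $y$ outside some finite set $F\subseteq Y$. Because $Y$ is infinite, pick $y_0\in Y\setminus F$; the key algebraic input I need is that $\du{H}$ separates the points of $H$, i.e.\ for every $h\in H\setminus\{0\}$ there exists $\chi\in\du{H}$ with $\chi(h)\ne 0$. This is a standard consequence of the divisibility (hence injectivity) of $\T$: define a nontrivial homomorphism $\langle h\rangle\to\T$ and extend it to $H$. Choose such a $\chi$ and set $\pi_{y_0}=\chi$, while for $y\ne y_0$ pick any $\pi_y\in V_y$. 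The resulting $\pi\in B\cap U_h$ witnesses density.

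Finally, since $\du{H}^Y$ is a product of compact Hausdorff groups, it is compact Hausdorff and therefore a Baire space. The countable intersection of the dense open sets $U_h$ is consequently a dense $G_\delta$-subset of $\du{H}^Y$, equal to $\Mono(H,\T^Y)$. No serious obstacle arises: the entire argument reduces to the point-separation property of $\du{H}$ combined with the availability of a free coordinate in $Y$, and I anticipate the only care needed is bookkeeping the identification $\Hom(H,\T^Y)\cong\du{H}^Y$ used throughout.
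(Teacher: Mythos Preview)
Your proof is correct and uses the same decomposition $\Mono(H,\T^Y)=\bigcap_{h\in H\setminus\{0\}}U_h$ and the same appeal to Lemma~\ref{openess:of:sets} for openness as the paper. The one minor difference is in establishing density: the paper directly exhibits a monomorphism in every basic open set by picking any $\rho\in V\subseteq\du{H}^F$ and pairing it with a monomorphism $\xi:H\to\T^{Y\setminus F}$ (which exists since $|H|=|Y\setminus F|=\omega$), whereas you prove each $U_h$ is dense and then invoke the Baire category theorem on the compact space $\du{H}^Y$. Both arguments rest on the same ingredients---point separation by $\du{H}$ and the availability of infinitely many free coordinates---so this is a variation in packaging rather than a genuinely different route; your version has the small advantage of reusing the $G_\delta$ decomposition for density, while the paper's version avoids appealing to Baire.
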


\begin{proof} For every $h\in H$, the set $V_h=\{\pi\in \du{H}^Y: \pi(h)\not=0\}$ is open in  $\du{H}^Y$ by Lemma \ref{openess:of:sets}, and so
$\Mono(H,\T^Y)=\bigcap\{V_h: h\in H\setminus\{0\}\}$ is a $G_\delta$-set in $\du{H}^Y$. 

Let $O$ be  an arbitrary non-empty open  subset of $\du{H}^Y$. There exists a non-empty open subset $V$ of $\du{H}^F$ such that 
$V\times \du{H}^{Y\setminus F}\subseteq O$. Choose $\rho\in V$. Since $|H|=|Y\setminus F|=\omega$,
there exists a monomorphism $\xi: H\to \T^{Y\setminus F}$.  Then $\sigma=(\rho,\xi)\in V\times \du{H}^{Y\setminus F}\subseteq O$
 is a monomorphism as well. This proves that $\sigma\in O\cap \Mono(H,\T^Y)\not=\emptyset$. Therefore, $\Mono(H,\T^Y)$ is dense in $\du{H}^Y$. 
\end{proof}

Given a subset $Y$ of a set $X$ and a subgroup $H$ of an abelian group $G$, we define the map $q^{XY}_{GH}:\du{G}^X\to \du{H}^Y$  by  $q^{XY}_{GH}\left(\{\chi_x\}_{x\in X}\right)=\{\chi_x\restr{H}\}_{x\in Y}$ for every  $\{\chi_x\}_{x\in X}\in \du{G}^X$. Note that, after the natural identification, $\du{G}^X=\Hom(G,\T^X)$ becomes a closed subgroup of $\T^{X\times G}$, and the map $q^{XY}_{GH}$ becomes the restriction  to $\du{G}^X$ of the natural projection map $p^{XY}_{GH}: \T^{X\times G}\to \T^{Y\times H}$. In particular, the map $q^{XY}_{GH}$ is continuous.

\begin{lemma}
\label{reflection:lemma}
Assume that  $Z$ is a countable subset of an abelian group $G$,  $X$ is an infinite set and $\mathscr{V}$ is a countable family of open  subsets of $\du{G}^X$.
Then there exist a countable subgroup $H$ of $G$ containing $Z$, a countably infinite set $Y\subseteq X$ and a family $\mathscr{U}=\{U_V:V\in\mathscr{V}\}$ of  open subsets of $\du{H}^Y$  such that $\left(q^{XY}_{GH}\right)^{-1}(U_V)$ is a dense subset of $V$
for every $V\in\mathscr{V}$.
\end{lemma}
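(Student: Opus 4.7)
The plan is a reflection (L\"owenheim--Skolem style) argument resting on two ingredients. \emph{First}, a basic open subset of $\du{G}^X$ is determined by a finite ``product-support'' $F\subseteq X$ and a finite set $D\subseteq G$ of ``test elements'', having the form $\{\chi\in\du{G}^X:\chi_x(g)\in O_{x,g}\text{ for }x\in F,\,g\in D\}$ for open $O_{x,g}\subseteq\T$; whenever $F\subseteq Y$ and $D\subseteq H$, such a set coincides with the $\left(q^{XY}_{GH}\right)^{-1}$-preimage of the ``obvious'' basic open subset of $\du{H}^Y$ with the same support, test set, and constraints. \emph{Second}, the product $\du{G}^X$ is a compact Hausdorff topological group, and every such group is ccc: its normalized Haar measure $\mu$ is strictly positive on every non-empty open set (compactness gives a finite cover of the whole group by translates of any non-empty open set, so translation invariance forces $\mu(U)>0$), and pairwise disjoint non-empty open sets therefore form a countable family.

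For each $V\in\mathscr{V}$, I would use Zorn's lemma to fix a maximal pairwise disjoint family $\mathscr{B}(V)$ of basic open subsets $B$ of $\du{G}^X$ with $B\subseteq V$. Maximality gives that $\bigcup\mathscr{B}(V)$ is dense in $V$, and ccc gives that $\mathscr{B}(V)$ is countable. For each $B\in\mathscr{B}(V)$ let $F_B\subseteq X$ and $D_B\subseteq G$ be the finite sets witnessing that $B$ is basic. Since $X$ is infinite, pick a countably infinite $Y_0\subseteq X$, set
$$Y=Y_0\cup\bigcup\{F_B:V\in\mathscr{V},\,B\in\mathscr{B}(V)\},$$
and let $H$ be the subgroup of $G$ generated by $Z\cup\bigcup\{D_B:V\in\mathscr{V},\,B\in\mathscr{B}(V)\}$. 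Then $Y$ is a countably infinite subset of $X$, and $H$ is a countable subgroup of $G$ containing $Z$.

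For each $V\in\mathscr{V}$ and $B\in\mathscr{B}(V)$, the inclusions $F_B\subseteq Y$ and $D_B\subseteq H$ produce a basic open $B'\subseteq\du{H}^Y$ with $B=\left(q^{XY}_{GH}\right)^{-1}(B')$. Setting $U_V=\bigcup\{B':B\in\mathscr{B}(V)\}$ gives an open subset of $\du{H}^Y$ whose preimage $\left(q^{XY}_{GH}\right)^{-1}(U_V)=\bigcup\mathscr{B}(V)$ is contained in $V$ and dense in $V$, as required. The main obstacle is the countability of each $\mathscr{B}(V)$: without ccc of $\du{G}^X$ the maximal antichain could fail to be countable, after which $H$ and $Y$ could no longer be kept countable. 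Everything else is routine bookkeeping about the supports and test sets of basic open sets.
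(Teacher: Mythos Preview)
Your proposal is correct and follows essentially the same approach as the paper: both arguments pick, for each $V\in\mathscr{V}$, a maximal pairwise disjoint family of basic open subsets of $\du{G}^X$ (viewed inside $\T^{X\times G}$) contained in $V$, invoke the ccc of the compact group $\du{G}^X$ via strict positivity of Haar measure to get countability, collect the finite supports into countable $Y\subseteq X$ and $H\le G$, and then observe that each such basic set is the $q^{XY}_{GH}$-preimage of the corresponding basic set in $\du{H}^Y$. The only cosmetic difference is that the paper phrases the last step as ``$q^{XY}_{GH}$ is open, so $U_V=q^{XY}_{GH}(V')$ is open'', whereas you write $U_V$ explicitly as a union of basic sets $B'$; these are the same set.
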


\begin{proof}
Fix $V\in\mathscr{V}$. Let $\mathscr{B}_V$ be the family of basic open subsets $B$ of the product $\T^{X\times G}$ such that $\emptyset\not=B\cap \du{G}^X\subseteq V$.
We apply Zorn's lemma to select a maximal subfamily $\mathscr{W}_V$ of $\mathscr{B}_V$ consisting of pairwise disjoint subsets of $V$. Since $\du{G}^X$ is a compact group and every non-empty open subset of $\du{G}^X$ has a positive Haar measure, $\mathscr{W}_V$ must be countable. 
Therefore, $\mathscr{W}=\bigcup_{V\in \mathscr{V}} \mathscr{W}_V$ is a countable family of basic open subsets of $\T^{X\times G}$, so there exist countable sets $I\subseteq G$ and $J\subseteq X$ such that each element of $\mathscr{W}$ depends only on coordinates in $I\times J$; that is, 
\begin{equation}
\label{elements:of:W:are:saturated}
W=\left(p^{XJ}_{GI}\right)^{-1}\left(p^{XJ}_{GI}(W)\right)
\mbox{ for every }
W\in\mathscr{W}.
\end{equation}

Fix a countably infinite subset $Y_0$ of $X$. Then $Y=J\cup Y_0$ is 
countably infinite, and
the subgroup $H$ 
 of $G$ generated by $I\cup Z$ is countable as well.
For typographical reasons, we let $p=p^{XY}_{GH}$ and $q=q^{XY}_{GH}$.

Let $V\in\mathscr{V}$ be arbitrary. The set $V'=\left(\bigcup\mathscr{W}_V\right)\cap \du{G}^X$ is dense in $V$ by maximality of $\mathscr{W}_V$. Since $I\subseteq H$ and $J\subseteq Y$, from \eqref{elements:of:W:are:saturated} it follows that  $W=p^{-1}(p(W))$ for every  $W\in\mathscr{W}$. Since $\mathscr{W}_V\subseteq \mathscr{W}$, this gives
$\bigcup\mathscr{W}_V=p^{-1}(p(\bigcup\mathscr{W}_V))$, and so $V'=q^{-1}(q(V'))$. Since $q:\du{G}^X\to \du{H}^Y$ is a continuous surjective homomorphism
defined on the compact group $\du{G}^X$, the map $q$ is open. Since  $V'$ is an open subset of $\du{G}^X$, the set $U_V=q(V')$ is an open subset of $\du{H}^Y$.
\end{proof}

\section{\Round{n} sets and the powers of the dual group}
\label{Kurama:section}

\begin{lemma}
\label{denseness:in:countable:powers}
Assume that $H$ is an abelian group, $S\in\TT(H)$, $Y$ is a non-empty set and  $O$ is an open subset of $\T^Y$  such that  $O\cap \T[\n_S]^Y\not=\emptyset$. Then 
\begin{equation}
\label{dense:set:in:countable:powers}
P_{S,O}=\{\pi\in \du{H}^Y: \pi(S)\cap O\not=\emptyset\}
\end{equation}
is an open dense subset of  $\du{H}^Y$.
\end{lemma}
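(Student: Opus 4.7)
First, openness of $P_{S,O}$ is immediate from Lemma \ref{openess:of:sets} applied with $K = \T^Y$, noting that $\Hom(H, \T^Y) = \du{H}^Y$. The substance of the lemma is density, which I plan to reduce to a finite-dimensional problem and then attack by iteration.

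Fix a non-empty basic open subset $W$ of $\du{H}^Y$. Since $W$ and $O$ each depend on only finitely many coordinates, I may assume they share a common finite support $F \subseteq Y$, so that $W = \prod_{y \in F} U_y \times \du{H}^{Y \setminus F}$ for basic open $U_y \subseteq \du{H}$ and $O = O_F \times \T^{Y \setminus F}$ for some open $O_F \subseteq \T^F$ with $O_F \cap \T[\n_S]^F \neq \emptyset$. Shrinking $O_F$ to a box around a point of $O_F \cap \T[\n_S]^F$, I may further assume $O_F = \prod_{y \in F} O_y$ with each $O_y$ open in $\T$ and $O_y \cap \T[\n_S]$ non-empty. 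The goal then becomes finding $\pi_y \in U_y$ for each $y \in F$ and a common $s \in S$ with $\pi_y(s) \in O_y$ for every $y \in F$; extending $(\pi_y)_{y \in F}$ arbitrarily to $Y \setminus F$ then yields a point of $W \cap P_{S,O}$.

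The main obstacle is that, while Remark \ref{Background}(v) supplies a dense set of $\pi \in \du{H}$ with $\pi(S)$ dense in $\T[\n_S]$, the condition $\pi_y(s) \in O_y$ for a \emph{common} $s$ is joint across the coordinates of $F$. I plan to overcome this by an iterative construction that peels off one coordinate at a time while shrinking the pool of candidate $s$. Enumerate $F = \{y_1, \ldots, y_k\}$ and set $S_0 = S$. At step $j$, assume inductively that $S_{j-1} \subseteq S$ is infinite, hence in $\TT_{\n_S}(H)$ by Remark \ref{Background}(iii); I then produce $\pi_{y_j} \in U_{y_j}$ such that $S_j := \pi_{y_j}^{-1}(O_{y_j}) \cap S_{j-1}$ is again infinite. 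After $k$ steps, $S_k$ is non-empty, and every $s \in S_k$ satisfies $\pi_{y_j}(s) \in O_{y_j}$ for all $j$.

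For the inductive step I would partition $S_{j-1}$ into countably many infinite pieces $S_{j-1,m}$ ($m \in \N$), each again in $\TT_{\n_S}(H)$ by Remark \ref{Background}(iii). For each $m$, the set $A_m = \{\pi \in \du{H} : \pi(S_{j-1,m}) \cap O_{y_j} \neq \emptyset\}$ is open by Lemma \ref{openess:of:sets} and is dense because, for any $\pi$ in the dense set supplied by Remark \ref{Background}(v) applied to $S_{j-1,m}$, the image $\pi(S_{j-1,m})$ is dense in $\T[\n_S]$ and therefore meets the open set $O_{y_j}$, which intersects $\T[\n_S]$ by construction. Since $\du{H}$ is a compact Hausdorff, hence Baire, group, the intersection $\bigcap_m A_m$ is a dense $G_\delta$ subset of $\du{H}$ and thus meets the non-empty open $U_{y_j}$; any $\pi_{y_j}$ in $U_{y_j} \cap \bigcap_m A_m$ places at least one element of every $S_{j-1,m}$ into $\pi_{y_j}^{-1}(O_{y_j})$, so $S_j$ is infinite, completing the induction.
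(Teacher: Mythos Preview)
Your proof is correct and shares the paper's overall architecture: reduce to a finite support $F$, then peel off one coordinate at a time while shrinking the pool of candidate $s\in S$ so that it stays infinite. The difference lies in how the inductive step is executed. The paper splits on $\n_S$: for $\n_S\ge 2$ it simply cites \cite[Lemma~3.7]{DT} for the finite-power case outright; for $\n_S=0$ it picks, via Remark~\ref{Background}(v), a single $\pi'\in U_{y_j}$ with $\pi'(S_{j-1})$ dense in $\T$, and then a one-line density-contradiction shows that $S_j=\{x\in S_{j-1}:\pi'(x)\in O_{y_j}\}$ is infinite. Your partition-plus-Baire argument is more elaborate at each step but has the virtue of working uniformly for all $\n_S$, so you derive the finite-power case entirely from the one-coordinate statement in Remark~\ref{Background}(v) without the separate appeal to \cite{DT}. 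Conversely, the paper's $\n_S=0$ step is lighter: no partition, no Baire category, just one well-chosen $\pi'$. Note that this simpler trick does \emph{not} extend to $\n_S\ge 2$, since $\T[\n_S]$ is then finite and ``$\pi'(S_{j-1})$ dense in $\T[\n_S]$'' only guarantees a single hit in $O_{y_j}$; this is precisely the gap your Baire argument closes.
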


\begin{proof}
The set $P_{S,O}$ is open in  $\du{H}^Y=\Hom(H,\T^Y)$ by Lemma \ref{openess:of:sets}.  Let us prove that $P_{S,O}$ is dense in $\du{H}^Y$. We start with the case of a finite $Y$.
\begin{claim}
\label{finite:case:claim}
$P_{S,O}$ is dense in $\du{H}^Y$ when $Y$ is finite.
\end{claim}

\begin{proof} For $\n_S\ge 2$, the density of $P_{S,O}$ in $\du{H}^Y$ follows from \cite[Lemma 3.7]{DT}. By Remark \ref{Background}(i),
only the case $\n_S=0$ remains, so we  shall assume now that $\n_S=0$; that is, $S\in \TT_0(H)$.  The rest of the proof proceeds by induction on the size of the set $Y$.
If $|Y|=1$, then $P_{S,O}$ is dense in $\du{H}$ by \cite[Lemma 3.3]{TY}; see also \cite[Lemma 4.2]{DT}.  Let $k\in\N^+$, and assume that we have already proved that the set  $P_{S^*,O^*}$ is dense in $\du{H}^{Y^*}$ whenever $S^*\in\TT_0(H)$, $Y^*$ is a finite set with $1\le |Y^*|\le k$ and $O^*$ is a non-empty open subset of $\T^{Y^*}$. Suppose now that $S\in\TT_0(H)$,  $Y$ is a finite set of size $k+1$ and $O$ is a non-empty open subset of $\T^Y$. Let $V$ be a non-empty open subset of $\du{H}^Y$. It suffices to show that $V\cap P_{S,O}\not=\emptyset$.

Choose $y\in Y$ arbitrarily, and let $Y^*=Y\setminus\{y\}$. There exists an open subset $O^*$ of $\T^{Y^*}$ and an open subset $O'$ of $\T$ such that $O^*\times O'\subseteq O$. Similarly,  there exist a non-empty open subset $V^*$ of $\du{H}^{Y^*}$ and a non-empty open subset $V'$ of  $\du{H}$ such that $V^*\times V'\subseteq V$.
Since $S\in\TT_0(H)$, from Remark \ref{Background}(v)  it follows that  $\mathbb{D}(S,\mathbb{T})$ is dense in $\du{H}$, so we can pick $\pi'\in V'$ such that $\pi'(S)$ is dense in $\T$. 
Assume that  $S^*=\{x\in S: \pi'(x)\in O'\}$ is finite. Then $O''=O'\setminus\pi'(S^*)$ is a non-empty open subset of $\T$ such that $\pi'(S)\cap O''=\emptyset$, in contradiction with density of $\pi'(S)$ in $\T$. Therefore,  $S^*$ must be infinite. Since $S^*\subseteq S\in\TT_0(H)$, 
Remark \ref{Background}(iii) yields that $S^*\in\TT_0(H)$.  Applying our inductive assumption to $S^*$, $Y^*$ and $O^*$,
we can choose $\pi^*\in V^*\cap P_{S^*,O^*}$.  Then $\pi^*(S^*)\cap O^*\not=\emptyset$ by 
\eqref{dense:set:in:countable:powers}, so there exists $x\in S^*\subseteq S$
with $\pi^*(x)\in O^*$. Now $\pi=(\pi^*,\pi')\in V^*\times V'\subseteq V$ and $\pi(x)=(\pi^*(x),\pi'(x))\in O^*\times O'\subseteq O$. Therefore, $\pi(S)\cap O\not=\emptyset$. We proved that $\pi\in V\cap P_{S,O}\not=\emptyset$.
\end{proof}

Assume now that $Y$ is infinite. Choose any $z=\{z_y\}_{y\in Y}\in O\cap \T[\n_S]^Y$. 
Since $O$ is open in $\T^Y$, there exist a non-empty finite subset $F$ of $Y$  and an open subset $O_F$ of $\T^F$ such that $z\in O_F\times \T^{Y\setminus F}\subseteq O$.
 From this and 
\eqref{dense:set:in:countable:powers},
one gets $P_{S,O_F}\times \du{H}^{Y\setminus F}\subseteq P_{S,O}$. Since $z\in \T[\n_S]^Y$, we have $z_F=\{z_y\}_{y\in F}\in O_F\cap \T[\n_S]^F\not=\emptyset$.  By Claim \ref{finite:case:claim}, $P_{S,O_F}$ is dense in $\du{H}^F$,  and so 
$P_{S,O_F}\times \du{H}^{Y\setminus F}$ is dense in $\du{H}^Y$. Therefore,  $P_{S,O}$ must be  dense in $\du{H}^Y$ as well. 
\end{proof}

The main idea of the proof of our next lemma comes from the classical  proof    of the Hewitt-Marczewski-Pondiczery theorem; see, for example, \cite[Theorem 2.3.15]{Eng}. Essentially, we produce a sophisticated adaptation of that proof to the power $\du{H}^\cont$ of the dual group $\du{H}$. 

\begin{lemma}
\label{Kurama:lemma}
Let $H$ be an abelian group and $\mathscr{S}$ a countable subfamily of $\TT(H)$.
Then there exists a group homomorphism $\pi: H\to \T^\cont$ such that $\pi(S)$ is dense in $\T[\n_S]^\cont$ for every $S\in\mathscr{S}$.
\end{lemma}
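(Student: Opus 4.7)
The plan is to adapt the classical proof of the Hewitt-Marczewski-Pondiczery theorem on separability of $\T^\cont$ to our setting, combining Baire category in the countable power $\du{H}^\omega$ with a step-function style construction of $\pi$ in $\du{H}^\cont$.

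First I produce a ``good'' element of $\du{H}^\omega$. Since $\du{H}^\omega$ is a compact Hausdorff space, hence a Baire space, and since $\T^\omega$ has a countable base, Lemma~\ref{denseness:in:countable:powers} implies that for each $S\in\mathscr{S}$ and each basic open $O\subseteq\T^\omega$ meeting $\T[\n_S]^\omega$, the set $P_{S,O}$ is dense open in $\du{H}^\omega$. There are countably many such pairs $(S,O)$, so their intersection is a dense $G_\delta$ set; any $\psi=(\psi_n)_{n\in\omega}$ chosen from it satisfies that $\psi(S)$ is dense in $\T[\n_S]^\omega$ for every $S\in\mathscr{S}$.

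Next I transfer the density property from $\omega$ to $\cont$ many coordinates. Identifying the index set $\cont$ with a subset of $2^\omega$ of cardinality $\cont$ yields a countable Boolean algebra $\mathcal{A}$ of subsets of $\cont$ (the traces on $\cont$ of the clopen subsets of $2^\omega$) that separates the points of $\cont$. Using $\psi$ together with the atoms of the finite $\mathcal{A}$-partitions of $\cont$, I construct a homomorphism $\pi=(\pi_\alpha)_{\alpha<\cont}\in\du{H}^\cont$ whose finite projections inherit from $\psi$ the required density. The verification is then the standard HMP-type argument: given a non-empty basic open $U\subseteq\T^\cont$ meeting $\T[\n_S]^\cont$, the set $U$ depends only on a finite coordinate set $F\subseteq\cont$; since $\mathcal{A}$ separates the points of $F$, I pick a finite $\mathcal{A}$-partition of $\cont$ whose distinct atoms cover $F$, and the construction of $\pi$ makes $\pi\restr{F}$ a ``shuffling'' of finitely many coordinates of $\psi$, so the density of $\psi(S)$ in $\T[\n_S]^\omega$ yields an $s\in S$ with $\pi(s)\in U$.

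The main obstacle lies in the second step: the explicit construction of $\pi$. A naive attempt to set $\pi_\alpha=\psi_{n(\alpha)}$ for some function $n\colon\cont\to\omega$ fails, since it forces $\pi(S)$ into a proper closed subgroup of $\T^\cont$ isomorphic to $\T^\omega$, which is not dense in $\T^\cont$. The characters $\pi_\alpha$ for distinct $\alpha$ must therefore be chosen genuinely distinct, in such a way as to ``spread'' the density information of $\psi$ across all $\cont$ coordinates simultaneously. The technical heart of the proof is the careful use of $\mathcal{A}$'s separating property to guarantee, in one single choice of $\pi$, that every finite projection $\pi\restr{F}$ is sufficiently generic to send $S$ densely into $\T[\n_S]^F$; reducing the $\cont$-many density requirements (one per finite $F\subseteq\cont$) to the countably many already handled in the Baire-category step for $\du{H}^\omega$ is precisely what the separating countable algebra $\mathcal{A}$ achieves.
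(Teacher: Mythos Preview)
Your first step is fine and indeed matches the paper in spirit: Lemma~\ref{denseness:in:countable:powers} plus Baire category in the compact space $\du{H}^\omega$ gives a single $\psi=(\psi_n)_{n\in\omega}\in\du{H}^\omega$ with $\psi(S)$ dense in $\T[\n_S]^\omega$ for every $S\in\mathscr{S}$.

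The second step, however, has a genuine gap. You say that for each finite $F\subseteq\cont$ the tuple $\pi\restr{F}$ will be ``a shuffling of finitely many coordinates of $\psi$''. Since $\pi$ is a \emph{single} element of $\du{H}^\cont$, this forces a fixed assignment $\alpha\mapsto\pi_\alpha\in\{\psi_n:n\in\omega\}$, i.e.\ $\pi_\alpha=\psi_{n(\alpha)}$ for some map $n:\cont\to\omega$. You yourself observe that this fails, and the presence of the separating Boolean algebra $\mathcal{A}$ does not rescue it: no matter how $\mathcal{A}$ is used to define $n$, there will be $\alpha\neq\alpha'$ with $n(\alpha)=n(\alpha')$, and then for $F=\{\alpha,\alpha'\}$ the image $\{(\pi_\alpha(s),\pi_{\alpha'}(s)):s\in S\}$ lies on the diagonal of $\T^2$, which is not dense in $\T[\n_S]^2$. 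The HMP machinery produces a countable dense \emph{set} in a $\cont$-power from a countable dense set in the base; here you need a single point of $\du{H}^\cont$ whose orbit under each $S$ is dense, and there is no ``transfer'' map $\du{H}^\omega\to\du{H}^\cont$ that does this from a single $\psi$.

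What the paper does instead is to interleave your two steps. Rather than first choosing characters and then distributing them, it builds by induction on $m$ a tree $\{U_g:g\in 2^m\}$ of non-empty open subsets of $\du{H}$ with $\overline{U_g}\subseteq U_{g\restr{m-1}}$ and, crucially, $\prod_{g\in 2^m}U_g\subseteq P_{S_j,O_\mu}$ for every relevant pair $(j,\mu)$ at level $m$; this is where Lemma~\ref{denseness:in:countable:powers} is applied, in $\du{H}^{2^m}$. Compactness of $\du{H}$ then yields, for each branch $f\in 2^\N$, a character $\pi_f\in\bigcap_m U_{f\restr{m}}$, and one sets $\pi=(\pi_f)_{f\in 2^\N}$. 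The point is that the characters $\pi_f$ are not drawn from a fixed countable list: they are limits along $\cont$-many distinct branches, and for any finite $F\subseteq 2^\N$ one goes to a level $m$ where the $f\restr{m}$ for $f\in F$ are pairwise distinct, so that $(\pi_f)_{f\in F}\in\prod_{f\in F}U_{f\restr{m}}$ inherits the density built into condition (ii$_m$). This tree-of-open-sets device is exactly the missing idea in your Step~2.
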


\begin{proof} In this proof only, it is beneficial for us to follow the common set-theoretic practice of identifying a natural number $m\in\mathbb{N}$ with the set $\{0,\ldots,m-1\}$ of all its predecessors. In particular, $0=\emptyset$ and $m= \{0,\ldots,m-1\}$ for $m\in\N^+$.

Let $\B=\{B_l:l\in\mathbb{N}\}$ be a countable base of $\T$ such that $B_0=\T$.  Let $\mathscr{S}=\{S_j:j\in\mathbb{N}\}$ be an enumeration of $\mathscr{S}$. Define $n_j=\n_{S_j}$ for every $j\in\N$. For every $m\in\N^+$, let
\begin{equation}
\label{O:phi}
O_\mu=\prod_{g\in 2^m} B_{\mu(g)}
\ \ 
\mbox{ for every function }
\mu: 2^m\to m,
\mbox{ and }
\end{equation}
\begin{equation}
\label{eq:L_m}
L_m=\left\{(j,\mu): j=0,\dots,m-1, \mu: 2^m\to m
\mbox{ is a function and }
O_\mu\cap\T[n_j]^{2^m}\not=\emptyset\right\}.
\end{equation}

By induction on $m\in\mathbb{N}$ we define a family $\{U_g: g\in 2^m\}$ 
of non-empty open subsets of $\du{H}$ with the following properties:
\begin{itemize}
  \item[(i$_m$)] if $m\in\N^+$, then $\overline{U_g}\subseteq U_{g\restr{m-1}}$ for all $g\in 2^m$,
  \item[(ii$_m$)] if $m\in\N^+$, 
then $\prod_{g\in 2^m} U_g\subseteq \bigcap\{P_{S_j,O_\mu}: (j,\mu)\in L_m\}$.
\end{itemize}

$U_{\emptyset}=\du{H}$ trivially satisfies (i$_0$) and (ii$_0$). Suppose that $m\in\N^+$, and for every $k< m$ we have already defined a family $\{U_g: g\in 2^k\}$  of non-empty open subsets of $\du{H}$ 
satisfying (i$_k$) and (ii$_k$). We are going to define a family $\{U_g:g\in 2^m\}$ 
of non-empty open subsets of $\du{H}$ satisfying (i$_{m}$) and (ii$_{m}$).

For each $g\in 2^{m}$, since $U_{g\restr{m-1}}\not=\emptyset$ and the space $\du{H}$ is completely regular, we can choose a non-empty open subset $V_g$ of $\du{H}$ with $\overline{V_g}\subseteq U_{g\restr{m-1}}$. 
Since $L_m$ is finite, applying Lemma \ref{denseness:in:countable:powers} to $Y=2^m$, we conclude that 
$
P=\bigcap\{P_{S_j,O_\mu}: (j,\mu)\in L_m\}
$
is an open dense subset of $\du{H}^{2^m}$. Since $V=\prod_{g\in 2^m} V_g$ is a non-empty open subset of $\du{H}^{2^m}$, so is $P\cap V$. Therefore, there exists a family $\{U_g:g\in 2^m\}$ 
of non-empty open subsets of $\du{H}$ such that $\prod_{g\in 2^m} U_g\subseteq P\cap V$. By our construction,  $U_g\subseteq V_g\subseteq \overline{V_g}\subseteq U_{g\restr{m-1}}$ for all $g\in 2^m$, so (i$_m$) holds. Clearly, (ii$_m$) holds as well. This finishes the inductive construction.

Let $f\in 2^\N$. Since (i$_m$) holds for all $m\in\N$, we have 
$$
U_\emptyset=U_{f\restr{0}}
\supseteq 
\overline{U_{f\restr{1}}}\supseteq {U_{f\restr{1}}}
\supseteq 
\overline{U_{f\restr{2}}}\supseteq {U_{f\restr{2}}}
\supseteq 
\cdots
\supseteq 
\overline{U_{f\restr{m}}}\supseteq {U_{f\restr{m}}}
\supseteq 
\overline{U_{f\restr{m+1}}}\supseteq {U_{f\restr{m+1}}}
\supseteq
\cdots.
$$
Since all $U_{f\restr{m}}$ are non-empty and $\du{H}$ is compact, 
this yields
\begin{equation}
\label{eq:E:g}
C_f=\bigcap_{m\in\N} U_{f\restr{m}}
=
\bigcap_{m\in\N} \overline{U_{f\restr{m}}}
\not=\emptyset.
\end{equation}
Therefore, there exists $\pi_f\in C_f$.

Define a homomorphism $\pi:H\to \T^{2^\N}$  by $\pi(x)=\{\pi_f(x)\}_{f\in 2^\N}\in \T^{2^\N}$ for all $x\in H$. Since $\T^{2^\N}$ and $\T^\cont$ are topologically isomorphic, 
it remains only to prove that  $\pi(S)$ is dense in $\T[\n_S]^{2^\N}$ for every $S\in\mathscr{S}$. Fix $S\in\mathscr{S}$.  Then $S=S_j$ for some $j\in\N$.  Since $\n_S=\n_{S_j}=n_j$, it suffices to prove that $\pi(S_j)\cap O\not=\emptyset$ for every open subset $O$ of  $\T^{2^\N}$ such that $O\cap \T[n_j]^{2^\N}\not=\emptyset$.  Fix such an $O$. By the definition of the Tychonoff product topology,  there exist a finite set $F\subseteq 2^\N$ and a set  $\{l_f: f\in F\}\subseteq \N$, such that
\begin{itemize}
\item[(a)]   $\prod_{f\in F}B_{l_f}\cap \T[n_j]^F\not=\emptyset$, and 
\item[(b)]   $\prod_{f\in F} B_{l_f}\times \T^{2^\N\setminus F}\subseteq O$.
\end{itemize}

\begin{claim}
\label{m:and:xi}
There exist $m\in\N$ and a function $\varphi:2^m\to 2^\N$ such that:
\begin{itemize}
\item[(i)] $m>\max\{l_f:f\in F\}$ and $m>j$,
\item[(ii)] $\varphi(g)\restr{m}=g$ for all $g\in 2^m$,
\item[(iii)] $F\subseteq \varphi(2^m)$.
\end{itemize}
\end{claim}
\begin{proof}
Choose $m\in\N^+$ such that $f\restr{m}\not=f'\restr{m}$ whenever $f,f'\in F$ and $f\not=f'$. Without loss of generality, we 
may
 assume  that 
(i) holds.
Let  $G=\{f\restr{m}:f\in F\}$.
For $g\in G$, define $\varphi(g)$ to be the unique $f\in F$ with $g=f\restr{m}$.
For $g\in 2^m\setminus G$, let $\varphi(g)$ be an arbitrary $f\in 2^\N$ such that
$f\restr{m}=g$. Now (ii) and (iii) are satisfied.
\end{proof}

Claim \ref{m:and:xi}(i) allows us to 
define the function $\mu:2^m\to m$ 
by letting
$\mu(g)=l_{\varphi(g)}$ if 
$\varphi(g)\in F$
and $\mu(g)=0$ 
otherwise, for every $g\in 2^m$.

\begin{claim}
\label{subclaim:3}
$(j,\mu)\in L_m$.
\end{claim}

\begin{proof}
Note that $j<m$ by Claim \ref{m:and:xi}(i). According to \eqref{eq:L_m}, 
it remains only to check that $O_\mu\cap \T[n_j]^{2^m}\not=\emptyset$.
By \eqref{O:phi}, to accomplish this, it suffices to show that $B_{\mu(g)}\cap\T[n_j]\not=\emptyset$ for every  $g\in 2^m$. If $\varphi(g)\in F$, then  $\mu(g)=l_{\varphi(g)}$, and so  $B_{\mu(g)}\cap \T[n_j]=B_{l_{\varphi(g)}}\cap \T[n_j]\not=\emptyset$ by (a). If $\varphi(g)\not\in F$, 
then $\mu(g)=0$ and $B_{\mu(g)}\cap\T[n_j]=B_0\cap\T[n_j]=\T[n_j]\not=\emptyset$, as $B_0=\T$.  
\end{proof}
 
For each $g\in 2^m$, from Claim \ref{m:and:xi}(ii) and 
\eqref{eq:E:g}, we get $\pi_{\varphi(g)}\in C_{\varphi(g)}\subseteq U_{\varphi(g)\restr{m}}=U_g$.
Therefore, $\{\pi_{\varphi(g)}\}_{g\in 2^m}\in \prod_{g\in 2^m} U_g$.  From 
this,
Claim \ref{subclaim:3} and (ii$_m$), it follows that $\{\pi_{\varphi(g)}\}_{g\in 2^m}\in P_{S_j,O_\mu}$. Combining this with \eqref{dense:set:in:countable:powers} and \eqref{O:phi},  we can  select  $x\in S_j$ such that $\pi_{\varphi(g)}(x)\in B_{\mu(g)}$ whenever $g\in 2^m$.  

Let $f\in F$ be arbitrary.
It follows from items (ii) and (iii) of Claim \ref{m:and:xi}
that $f=\varphi(g)$, where
$g=f\restr{m}\in 2^m$,
and so
$\pi_f(x)=\pi_{\varphi(g)}(x) \in B_{\mu(g)}=B_{l_{\varphi(g)}}=B_{l_f}$. 
 From (b), we get $\pi(x)\in O$. Since $x\in S_j$, we obtain
$\pi(x)\in\pi(S_j)\cap O\not=\emptyset$.
\end{proof}

\section{Proof of Theorem \ref{main:result}}
\label{proof:section}

We are going to carry out the proof of item (i) and the ``if'' part of item (ii) simultaneously.
In order to do this, define $\Sigma^\flat_{G,\mathscr{E}}=\Sigma_{G,\mathscr{E}}\cap\Mono(G,\T^\cont)$ if  $|G|\le 2^\cont$ and $\Sigma^\flat_{G,\mathscr{E}}=\Sigma_{G,\mathscr{E}}$ otherwise. Our goal is to prove that $\Sigma^\flat_{G,\mathscr{E}}$ is a dense subspace of $\du{G}^\cont$ having the Baire property. By Remark \ref{Baire:remark}(i), in order to achieve this, 
it suffices to check that $\Sigma^\flat_{G,\mathscr{E}}\cap W\cap\bigcap\mathscr{V}^*\not=\emptyset$
whenever $W$ is a non-empty open subset of $\du{G}^\cont$ and $\mathscr{V}^*$ is a countable family of open dense subsets of $\du{G}^\cont$.

First, we 
apply Lemma \ref{reflection:lemma} to  $G$, $Z=\bigcup\E$, $X=\cont$  and $\mathscr{V}=\mathscr{V}^*\cup\{W\}$ to choose $H$, $Y$ and $\mathscr{U}$ as in the conclusion of this lemma. In particular, $\mathscr{U}=\{U_V:V\in\mathscr{V}\}$  is a family of open subsets of $\du{H}^Y$.

\begin{claim}
\label{claim:3}
$U_W\not=\emptyset$ and $U_V$ is a dense in $\du{H}^Y$ for every $V\in\mathscr{V}^*$. 
\end{claim}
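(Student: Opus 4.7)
The plan is to exploit the surjectivity and openness of the restriction map $q = q^{XY}_{GH}\colon \du{G}^X \to \du{H}^Y$ (where $X = \cont$), together with the fact, supplied by Lemma \ref{reflection:lemma}, that $q^{-1}(U_V)$ is dense in $V$ for every $V \in \mathscr{V} = \mathscr{V}^* \cup \{W\}$.

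First, I would record that $q$ is a continuous, open, surjective group homomorphism between compact groups. Openness is already observed inside the proof of Lemma \ref{reflection:lemma}. Surjectivity follows because $\T$ is divisible (hence injective in the category of abelian groups), so every homomorphism $H \to \T$ extends to a homomorphism $G \to \T$; thus the coordinatewise restriction map $\du{G} \to \du{H}$ is surjective, and projecting the $X$-indexed product onto the $Y$-indexed sub-product is clearly surjective.

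For the first half of the claim, I would apply the density conclusion of Lemma \ref{reflection:lemma} to $V = W$: the set $q^{-1}(U_W)$ is dense in the non-empty open set $W$, hence non-empty, so $U_W = q(q^{-1}(U_W))$ is non-empty as well.

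For the second half, fix $V \in \mathscr{V}^*$. Since $V$ is dense in $\du{G}^X$ and $q^{-1}(U_V)$ is dense in $V$, the set $q^{-1}(U_V)$ is dense in $\du{G}^X$. To deduce density of $U_V$ in $\du{H}^Y$, let $O$ be an arbitrary non-empty open subset of $\du{H}^Y$. By surjectivity of $q$, the preimage $q^{-1}(O)$ is non-empty, and by continuity it is open in $\du{G}^X$. Density of $q^{-1}(U_V)$ yields $q^{-1}(O) \cap q^{-1}(U_V) = q^{-1}(O \cap U_V) \neq \emptyset$, and therefore $O \cap U_V \neq \emptyset$. Thus $U_V$ is dense in $\du{H}^Y$. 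I do not anticipate any real obstacle here — the claim is essentially a bookkeeping consequence of Lemma \ref{reflection:lemma} plus the elementary fact that a continuous open surjection pushes dense preimages to dense sets; the only mildly delicate point is remembering to invoke divisibility of $\T$ to justify surjectivity of $q$.
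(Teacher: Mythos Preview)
Your proof is correct and follows essentially the same route as the paper: both arguments extract from Lemma~\ref{reflection:lemma} that $q^{-1}(U_V)$ is dense in $V$, then use continuity and surjectivity of $q=q^{XY}_{GH}$ to push this down to $\du{H}^Y$. The paper phrases the last step as ``$U_V$ is dense in $q(V)$, and $q(V)$ is dense in $q(\du{G}^X)=\du{H}^Y$,'' while you phrase it via preimages of open sets; these are equivalent, and note that the openness of $q$ you recorded is not actually needed in either version.
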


\begin{proof}
Since the map $q^{XY}_{GH}$ is continuous, from  Lemma \ref{reflection:lemma} it follows that 
$U_V$ is dense in  $q^{XY}_{GH}(V)$ for every $V\in\mathscr{V}$.
In particular, $U_W$ is dense in $q^{XY}_{GH}(W)\not=\emptyset$, so $U_W$ must be non-empty. 
Since each $V\in\mathscr{V}^*$ is dense in $\du{G}^X$ and the map $q^{XY}_{GH}$ is continuous, $q^{XY}_{GH}(V)$ must be dense in $q^{XY}_{GH}(\du{G}^X)=\du{H}^Y$. Hence, $U_V$ is dense in $H^Y$
for every $V\in\mathscr{V}^*$.
\end{proof}

Let $\B$ be a countable base of $\T^Y$. Define
\begin{equation}
\label{eq:D}
D=\bigcap 
\left\{P_{E,O}: E\in\E, O\in\mathscr{B}, O\cap\T[\n_E]^Y\not=\emptyset\right\}\cap
\Mono(H,\T^Y)\cap \bigcap _{V\in\mathscr{V}^*} U_V,
\end{equation}
where $P_{E,O}$ are the sets defined in \eqref{dense:set:in:countable:powers}.

\begin{claim}
\label{claim:4}
There exists $\theta\in  D\cap U_W$.
\end{claim}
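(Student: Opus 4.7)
The plan is to apply the Baire category theorem inside the compact Hausdorff group $\du{H}^Y$. I will verify that the set $D$ defined in \eqref{eq:D} is a countable intersection of open dense subsets of $\du{H}^Y$, and then intersect with the non-empty open set $U_W$ furnished by Claim \ref{claim:3}. Since the subgroup $H$ from Lemma \ref{reflection:lemma} is countable and $Y$ is countably infinite, each of the three ``pieces'' making up $D$ will be handled by a different previously established lemma.

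First, for each $E\in\mathscr{E}$ we have $E\subseteq Z\subseteq H$ by our choice of $H$ in Lemma \ref{reflection:lemma}, so by Remark \ref{Background}(iv) each $E$ belongs to $\TT(H)$ with the same value of $\n_E$ as in $G$. Since $\mathscr{E}$ is countable and the base $\mathscr{B}$ of $\T^Y$ is countable, the family $\{P_{E,O}:E\in\mathscr{E},\ O\in\mathscr{B},\ O\cap\T[\n_E]^Y\not=\emptyset\}$ is countable; by Lemma \ref{denseness:in:countable:powers} applied to $H$, each member of this family is an open dense subset of $\du{H}^Y$. Next, $H$ is countable and $Y$ is countably infinite, so by Lemma \ref{monomorphisms:are:dense:G-delta:for:countable:groups} the set $\Mono(H,\T^Y)$ is a dense $G_\delta$ in $\du{H}^Y$; equivalently, it is a countable intersection of open dense sets. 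Finally, for each $V\in\mathscr{V}^*$ the set $U_V$ is open in $\du{H}^Y$ by the construction in Lemma \ref{reflection:lemma}, and dense in $\du{H}^Y$ by Claim \ref{claim:3}.

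Combining these observations, $D$ is a countable intersection of open dense subsets of the compact Hausdorff space $\du{H}^Y$. Since compact Hausdorff spaces have the Baire property, $D$ is dense in $\du{H}^Y$. By Claim \ref{claim:3}, $U_W$ is a non-empty open subset of $\du{H}^Y$, so $D\cap U_W\not=\emptyset$, which yields the desired $\theta$.

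I do not anticipate any real obstacle at this step: everything reduces to bookkeeping that ensures the hypotheses of Lemmas \ref{denseness:in:countable:powers}, \ref{monomorphisms:are:dense:G-delta:for:countable:groups}, and \ref{reflection:lemma} are in force (countability of $H$, $\mathscr{E}$, $\mathscr{B}$, and $\mathscr{V}^*$, countable infinitude of $Y$, and the inclusion $E\subseteq H$). The genuine work of the theorem lies elsewhere, namely in the reflection step (Lemma \ref{reflection:lemma}) and in lifting $\theta\in\du{H}^Y$ to an element of $\Sigma^\flat_{G,\mathscr{E}}\cap W\cap\bigcap\mathscr{V}^*$ via an extension of homomorphisms argument—Claim \ref{claim:4} is merely the ``Baire'' core of the proof.
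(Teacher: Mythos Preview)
Your proposal is correct and follows essentially the same approach as the paper: verify that each of the three pieces of $D$ is a countable intersection of open dense subsets of the compact Hausdorff space $\du{H}^Y$ (via Lemma~\ref{denseness:in:countable:powers}, Lemma~\ref{monomorphisms:are:dense:G-delta:for:countable:groups}, and Claim~\ref{claim:3}, respectively), apply the Baire category theorem, and intersect with the non-empty open set $U_W$. Your explicit invocation of Remark~\ref{Background}(iv) to ensure $E\in\TT(H)$ is a detail the paper leaves implicit, but otherwise the arguments coincide.
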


\begin{proof}
If $E\in\E$, $O\in\B$  and $O\cap\T[\n_E]^Y\not=\emptyset$, then  $P_{E,O}$ is a dense open subset of $\du{H}^Y$ by Lemma 
\ref{denseness:in:countable:powers}. Furthermore, $\Mono(H,\T^Y)$ is a dense $G_\delta$-subset of $\du{H}^Y$ by  Lemma \ref{monomorphisms:are:dense:G-delta:for:countable:groups}. For every $V\in\mathscr{V}^*$, $U_V$ is an open dense subset    of $\du{H}^Y$ by 
Claim \ref{claim:3}. Since $\E$, $\mathscr{B}$ and $\mathscr{V}^*$ are countable,  $D$ is an intersection of a countable family of open dense subsets of $\du{H}^Y$. Since $\du{H}^Y$ is compact, $D$ must be dense in $\du{H}^Y$. Since $U_W$ is a non-empty open subset of $\du{H}^Y$ by Claim \ref{claim:3}, $D\cap U_W\not=\emptyset$. This allows us to choose $\theta\in D\cap U_W$.
\end{proof}

For  $E\in\E$ and $O\in\B$,  let 
\begin{equation}
\label{eq:S}
S_{E,O}=\{x\in E: \theta(x)\in O\}.
\end{equation}
Since both $\E$ and $\B$ are countable, the set 
\begin{equation}
\label{eq:E'}
\mathscr{S}=\{S_{E,O}: E\in\E, O\in\B, |S_{E,O}|=\omega\}.
\end{equation}
is countable as well. 

\begin{claim}
\label{claim:pi}
There exists a homomorphism $\pi:H\to \T^{\cont\setminus Y}$ such that 
$\pi(S)$ is dense in $\T[\n_S]^{\cont\setminus Y}$ for every $S\in\mathscr{S}$.
\end{claim}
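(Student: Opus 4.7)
The plan is to obtain $\pi$ by a direct application of Lemma \ref{Kurama:lemma} to the group $H$ and the family $\mathscr{S}$, after verifying that $\mathscr{S}$ is an admissible input, and then transport the resulting map along an identification of $\T^\cont$ with $\T^{\cont\setminus Y}$.

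First I would check that $\mathscr{S}\subseteq \TT(H)$. Let $S_{E,O}\in\mathscr{S}$. By construction $E\in\mathscr{E}\subseteq \TT(G)$, so $E\in \TT_{\n_E}(G)$. The subgroup $H$ chosen via Lemma \ref{reflection:lemma} contains $Z=\bigcup\E$, hence $E\subseteq H$, and Remark \ref{Background}(iv) gives $E\in \TT_{\n_E}(H)$. Now $S_{E,O}\subseteq E$ by \eqref{eq:S}, and the clause $|S_{E,O}|=\omega$ in \eqref{eq:E'} makes $S_{E,O}$ a countably infinite subset of the \round{\n_E} set $E\subseteq H$. Remark \ref{Background}(iii) then yields $S_{E,O}\in \TT_{\n_E}(H)$. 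In particular, $\mathscr{S}\subseteq \TT(H)$ and $\n_{S_{E,O}}=\n_E$ for every $S_{E,O}\in\mathscr{S}$. Since $\E$ and $\B$ are countable, so is $\mathscr{S}$.

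Next I would apply Lemma \ref{Kurama:lemma} to the countable abelian group $H$ and the countable subfamily $\mathscr{S}$ of $\TT(H)$ to obtain a homomorphism $\pi^*:H\to \T^\cont$ such that $\pi^*(S)$ is dense in $\T[\n_S]^\cont$ for every $S\in\mathscr{S}$. It remains to reindex the codomain. Since $Y$ is countable and $|\cont|=\cont$, the set $\cont\setminus Y$ has cardinality $\cont$, so any bijection $\beta:\cont\to \cont\setminus Y$ induces a topological group isomorphism $\Phi_\beta:\T^\cont\to \T^{\cont\setminus Y}$ sending $\{t_\alpha\}_{\alpha<\cont}$ to $\{t_{\beta^{-1}(y)}\}_{y\in \cont\setminus Y}$. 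Setting $\pi=\Phi_\beta\circ \pi^*$ produces the required homomorphism: $\Phi_\beta$ is a homeomorphism and maps $\T[n]^\cont$ onto $\T[n]^{\cont\setminus Y}$ for every $n\in\N$, so $\pi(S)=\Phi_\beta(\pi^*(S))$ is dense in $\T[\n_S]^{\cont\setminus Y}$ for every $S\in\mathscr{S}$.

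The only nontrivial point is the first step, which reduces to correctly invoking the inheritance properties of \round{n} sets collected in Remark \ref{Background}; everything else is bookkeeping and the content of Lemma \ref{Kurama:lemma}. I expect no further obstacle.
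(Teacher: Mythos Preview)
Your proof is correct and follows essentially the same route as the paper: verify $\mathscr{S}\subseteq\TT(H)$ using the inheritance properties in Remark~\ref{Background}(iii),(iv), then invoke Lemma~\ref{Kurama:lemma} together with the identification $\T^\cont\cong\T^{\cont\setminus Y}$ coming from $|Y|\le\omega$. The only cosmetic difference is that you apply (iv) before (iii) (passing $E$ down to $H$ first, then to $S_{E,O}$) whereas the paper does the reverse, and you record $\n_{S_{E,O}}=\n_E$ in passing, which the paper defers to a later claim.
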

\begin{proof}

Let $S\in \mathscr{S}$. Then $S=S_{E,O}\subseteq E\subseteq \bigcup\mathscr{E}=Z\subseteq H$ for some $E\in\E$ and $O\in\B$.
Since $\E\subseteq \TT(G)$ and $S$ is infinite, from this and  Remark \ref{Background}(iii) we conclude that $\mathscr{S}\subseteq \TT(G)$.
Since $S\subseteq H$ for every $S\in\mathscr{S}$, we also have $\mathscr{S}\subseteq \TT(H)$ by Remark \ref{Background}(iv).
Since $Y$ is countable,  $|\cont\setminus Y|=\cont$, and so $\T^\cont$ and $\T^{\cont\setminus Y}$ are topologically isomorphic. Now the conclusion follows from Lemma \ref{Kurama:lemma}.
\end{proof}

Since $\theta\in D\subseteq \Mono(H,\T^Y)$ by Claim \ref{claim:4} and \eqref{eq:D}, $\theta: H\to\T^Y$ is a monomorphism. Therefore,   the map $\xi=(\theta,\pi): H\to \T^Y\times\T^{\cont\setminus Y}$ is a monomorphism as well.  Since $\T^\cont$ is divisible, there exists a  homomorphism $\sigma:G\to \T^Y\times\T^{\cont\setminus Y}=\T^\cont$ extending $\xi$. Furthermore, if $|G|\le 2^\cont$,  then \cite[Lemma 3.17]{DS} allows us to find a monomorphism $\sigma:G\to \T^Y\times\T^{\cont\setminus Y}=\T^\cont$ extending $\xi$. Indeed, the assumptions of \cite[Lemma 3.17]{DS} are satisfied, because $\T^\cont$ is divisible, $|G|\le 2^\cont$, $|H|=\omega<2^\cont$, 
$|\T^\cont|=r(\T^\cont)=2^\cont$ and $r_p(\T^\cont)=2^\cont$ for all prime numbers $p$ (see, for example, \cite[Lemma 4.1]{DS-Memoirs} for computations of the ranks of $\T^\cont$).

Let us show that $\sigma\in \Sigma_{G,\E}^\flat\cap W\cap\bigcap\mathscr{V}^*$,
thereby proving that $\Sigma_{G,\E}^\flat\cap W\cap\bigcap\mathscr{V}^*\not=\emptyset$. Note that 
$$
q^{XY}_{GH}(\sigma)=\theta\in D\cap U_W\subseteq  \bigcap \{U_V: V\in\mathscr{V}^*\}\cap U_W 
=
\bigcap\{U_V:V\in\mathscr{V}\}
$$
by Claim \ref{claim:4} and \eqref{eq:D}, so
$$
\sigma\in \left(q^{XY}_{GH}\right)^{-1}\left(\bigcap_{V\in\mathscr{V}} U_V\right)=
\bigcap_{V\in\mathscr{V}}\left(q^{XY}_{GH}\right)^{-1}(U_V)
\subseteq 
\bigcap_{V\in\mathscr{V}}V
=
\bigcap\mathscr{V}
=
W\cap \bigcap\mathscr{V}^*
$$
by Lemma \ref{reflection:lemma}. Since $\sigma\in\Mono(G,\T^\cont)$ when $|G|\le 2^\cont$, it remains only to check that $\sigma\in  \Sigma_{G,\E}$. Fix $E\in\E$. Recalling  \eqref{Sigma:G}, we need to prove that $\sigma(E)$ is dense in $\T[\n_E]^\cont$. Let $O^*$ be an arbitrary open subset of $\T^\cont$ with  $O^*\cap \T[\n_E]^\cont\not=\emptyset$. It suffices to prove that $\sigma(E)\cap O^*\not=\emptyset$. Since $\mathscr{B}$ is a base of $\T^Y$, there exist $O\in\B$ and an open subset $V$ of $\T^{\cont\setminus Y}$ such that $O\times V\subseteq O^*$, $O\cap \T[\n_E]^Y\not=\emptyset$ and $V\cap \T[\n_E]^{\cont\setminus Y}\not=\emptyset$. 

\begin{claim}
\label{subclaim:6.1}
$S_{E,O}\in\mathscr{S}$.
\end{claim}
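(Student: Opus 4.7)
The plan is to show that $S_{E,O}$ is infinite; combined with $S_{E,O}\subseteq E$ and the fact that $E$ is countably infinite (since $E\in\TT(G)$ by Definition \ref{def:of:almost:n:torsion:sets}(i)), this will give $|S_{E,O}|=\omega$, and hence $S_{E,O}\in\mathscr{S}$ by \eqref{eq:E'}.

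As a preliminary, I would first show that $\theta(E)$ is dense in $\T[\n_E]^Y$. Given any non-empty open $U\subseteq\T^Y$ with $U\cap\T[\n_E]^Y\ne\emptyset$, one uses that $\mathscr{B}$ is a base of $\T^Y$ to pick $O''\in\mathscr{B}$ with $O''\subseteq U$ and $O''\cap\T[\n_E]^Y\ne\emptyset$. Such an $O''$ is one of the sets appearing in the intersection \eqref{eq:D} defining $D$, so Claim \ref{claim:4} gives $\theta\in D\subseteq P_{E,O''}$, and then \eqref{dense:set:in:countable:powers} yields $\emptyset\ne\theta(E)\cap O''\subseteq\theta(E)\cap U$.

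The main step is then a contradiction argument: suppose $S_{E,O}$ is finite. By \eqref{eq:S}, $\theta(E)\cap O=\theta(S_{E,O})$ is finite, hence closed in the Hausdorff space $\T[\n_E]^Y$. The density of $\theta(E)$ in $\T[\n_E]^Y$ just established makes $\theta(E)\cap O$ dense in the relatively open subset $O\cap\T[\n_E]^Y$ of $\T[\n_E]^Y$; since $\theta(E)\cap O$ is also closed in $\T[\n_E]^Y$, this forces $O\cap\T[\n_E]^Y=\theta(E)\cap O$, so $O\cap\T[\n_E]^Y$ is a finite non-empty open subset of the topological group $\T[\n_E]^Y$. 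But $\T[\n_E]^Y$ is infinite---by Remark \ref{Background}(i) we have $\n_E\ne 1$, so either $\n_E=0$ and $\T[\n_E]^Y=\T^Y$, or $\n_E\ge 2$ with $Y$ countably infinite and $|\T[\n_E]^Y|\ge 2^\omega$---and it is compact, hence non-discrete. In a non-discrete Hausdorff topological group no singleton (and thus no finite subset) can be open, yielding the desired contradiction. The only real ingredient is the density of $\theta(E)$ in $\T[\n_E]^Y$, which flows directly from the construction of $D$, so no serious obstacle arises.
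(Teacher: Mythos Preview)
Your proof is correct and follows essentially the same strategy as the paper: both arguments reduce to the fact that $\theta\in D\subseteq P_{E,O''}$ for every $O''\in\mathscr{B}$ meeting $\T[\n_E]^Y$, and both derive a contradiction from the finiteness of $S_{E,O}$ together with the infinitude of $O\cap\T[\n_E]^Y$ (which uses $\n_E\ne 1$ and $|Y|\ge\omega$). The only difference is packaging: you first isolate the density of $\theta(E)$ in $\T[\n_E]^Y$ as a preliminary and then run a dense-plus-closed argument to force $O\cap\T[\n_E]^Y$ to be finite, whereas the paper argues more directly by deleting the finite set $\theta(S_{E,O})$ from $O$, picking a basic $O''\subseteq O\setminus\theta(S_{E,O})$ still meeting $\T[\n_E]^Y$, and invoking $\theta\in P_{E,O''}$ to produce an $x\in S_{E,O}$ with $\theta(x)\notin\theta(S_{E,O})$.
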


\begin{proof} By 
\eqref{eq:E'}, it suffices to show that the set $S_{E,O}$ is infinite. Recall that $\n_E\not=1$ by Remark \ref{Background}(i). Since $Y$ is infinite, the  non-empty open subset $O\cap \T[\n_E]^Y$ of  $\T[\n_E]^Y$ is infinite. Assume that $S_{E,O}$ is finite. Then $O'=O\setminus\theta(S_{E,O})$
is a non-empty open subset of $\T^Y$ with $O'\cap \T[\n_E]^Y\not=\emptyset$. Since $\mathscr{B}$ is a base of $\T^Y$, we can choose $O''\in\mathscr{B}$ such that $O''\subseteq O'$ and $O''\cap \T[\n_E]^Y\not=\emptyset$. Then $\theta\in D\subseteq P_{E,O''}$ by \eqref{eq:D}, and  from \eqref{dense:set:in:countable:powers} we conclude that $\theta(x)\in O''\subseteq O'$ for some $x\in E$.
Since $O'\subseteq O$,  from \eqref{eq:S} it follows that $x\in S_{E,O}$, which yields $\theta(x)\in \theta(S_{E,O})$. Therefore, $\theta(x)\not\in O\setminus \theta(S_{E,O})=O'$, giving a contradiction.
\end{proof}

\begin{claim}
\label{subclaim:6.2}
$\n_{S_{E,O}}=\n_E$.
\end{claim}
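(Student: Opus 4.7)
The plan is to argue that $S_{E,O}$ inherits the \round{\n_E} property from $E$ as an infinite subset, so $S_{E,O} \in \TT_{\n_E}(G)$, and then to invoke the uniqueness of the index $n$ attached to any set in $\TT(G)$ to conclude that $\n_{S_{E,O}} = \n_E$. In other words, this should be a pure bookkeeping consequence of the definitions and the basic properties of \round{n} sets collected in Remark \ref{Background}.

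More concretely, I would first recall that, by definition of $\n_E$, the hypothesis $E \in \mathscr{E} \subseteq \TT(G)$ means exactly $E \in \TT_{\n_E}(G)$. By \eqref{eq:S} we have $S_{E,O} \subseteq E$, and Claim \ref{subclaim:6.1} has just established that $S_{E,O}$ is infinite, so $S_{E,O}$ is an infinite subset of $E \in \TT_{\n_E}(G)$. Remark \ref{Background}(iii) states that every family $\TT_n(G)$ is closed under taking infinite subsets, which immediately yields $S_{E,O} \in \TT_{\n_E}(G)$. Finally, Remark \ref{Background}(ii) — the pairwise disjointness of the families $\TT_n(G)$ for distinct $n$ — tells us that the integer $n$ with $S_{E,O} \in \TT_n(G)$ is uniquely determined, and since we have just exhibited $\n_E$ as such an integer, the uniqueness forces $\n_{S_{E,O}} = \n_E$.

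There is no genuine obstacle in this claim: both ingredients have been packaged for precisely this purpose in Remark \ref{Background}, and the sole nontrivial input — namely, that $S_{E,O}$ is infinite — has been supplied by the preceding Claim \ref{subclaim:6.1}. The only thing to be careful about is to verify that $S_{E,O}$ really is handled as a subset of $G$ (so that Remark \ref{Background}(iii) applies in $G$ rather than in $H$), but since $S_{E,O} \subseteq E \subseteq G$ this is immediate.
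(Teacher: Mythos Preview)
Your proof is correct and follows exactly the same line as the paper's: use Remark~\ref{Background}(iii) to conclude that the infinite subset $S_{E,O}$ of $E$ is \round{\n_E}, and then invoke the uniqueness in Remark~\ref{Background}(ii) to get $\n_{S_{E,O}}=\n_E$. Your additional remark about $G$ versus $H$ is harmless and in any case covered by Remark~\ref{Background}(iv).
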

\begin{proof}
Being an infinite subset of  the \round{\n_E} set $E$, the set $S_{E,O}$ is also \round{\n_E} by Remark \ref{Background}(iii). Now the conclusion follows from Remark \ref{Background}(ii).
\end{proof}

The set $\pi(S_{E,O})$ is dense in $\T[\n_E]^{\cont\setminus Y}$ by Claims \ref{claim:pi}, \ref{subclaim:6.1} and \ref{subclaim:6.2}. Since $V\cap \T[\n_E]^{\cont\setminus Y}$ is a non-empty open subset of $\T[\n_E]^{\cont\setminus Y}$, there exists $x\in S_{E,O}$ with $\pi(x)\in V$. Therefore, $\theta(x)\in O$ by \eqref{eq:S}, and so
$\xi(x)=(\theta(x),\pi(x))\in O\times V\subseteq O^*$.  Since $x\in S_{E,O}\subseteq E$, it follows that $\xi(x)\in \xi(E)\cap O^*\not=\emptyset$. 
Since $x\in E\subseteq Z\subseteq H$ and $\sigma\restr{H}=\xi$, we conclude  that $\sigma(x)\in \sigma(E)\cap O^*\not=\emptyset$. 
This finishes the proof of item (i) and the ``if'' part of item (ii).

Assume now that $\Sigma_{G,\mathscr{E}}\cap\Mono(G,\T^\cont)$ is dense in $\du{G}^\cont$,
and choose $\sigma\in\Sigma_{G,\mathscr{E}}\cap\Mono(G,\T^\cont)$. Since $\sigma$ is a monomorphism, $|G|=|\sigma(G)|\le |\T^\cont|\le 2^\cont$. This proves the ``only if'' part of item (ii).
\qed

\section{Comparison of Borel complexity of $\mathbb{D}({S,K})$ and $\mathbb{U}({S,K})$}
\label{section:9}

\begin{prop} 
\label{G:delta-sets}
 Let $G$ be an abelian group and $K$ a compact metric abelian group.
\begin{itemize}
\item[(i)] If $S$ is a countably infinite subset of $G$, then $\mathbb{D}({S,K})$ is a $G_{\delta}$-set in $\Hom(G,K)$. 
\item[(ii)] If $S=\{a_n:n\in\N\}$ is a  one-to-one sequence in $G$, then $\mathbb{U}({S,K})$ is an $F_{\sigma\delta}$-set in $\Hom(G,K)$. 
\end{itemize}
\end{prop}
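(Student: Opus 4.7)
The plan is to handle the two parts separately, using in both cases that compactness and metrizability of $K$ gives us a countable amount of ``test data'' to check against.

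For part (i), I would fix a countable base $\{B_n:n\in\N\}$ of the compact metric space $K$ (which exists since compact metric implies second countable). Then, by definition, $h(S)$ is dense in $K$ if and only if $h(S)\cap B_n\neq\emptyset$ for every $n\in\N$, so
\[
\mathbb{D}({S,K})=\bigcap_{n\in\N}\{h\in\Hom(G,K):h(S)\cap B_n\neq\emptyset\}.
\]
Each factor on the right is open in $\Hom(G,K)$ by Lemma \ref{openess:of:sets} applied to the open set $B_n$, so $\mathbb{D}({S,K})$ is a countable intersection of open sets, i.e.\ a $G_\delta$-set. This part should be immediate.

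For part (ii), I would invoke the Weyl criterion and the fact that the Pontryagin dual $\du{K}$ of a compact metric abelian group is countable. Enumerate $\du{K}\setminus\{0\}=\{\chi_k:k\in\N\}$. For each $k\in\N$ and $n\in\N^+$, define $\psi_{k,n}:\Hom(G,K)\to\mathbb{C}$ by
\[
\psi_{k,n}(h)=\frac{1}{n}\sum_{j=1}^{n}\chi_k(h(a_j)).
\]
Since $h\mapsto h(a_j)$ is continuous on $\Hom(G,K)\subseteq K^G$ and $\chi_k:K\to\mathbb{S}$ is continuous, each $\psi_{k,n}$ is a continuous $\mathbb{C}$-valued function. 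By the Weyl criterion, $h\in\mathbb{U}({S,K})$ iff $\psi_{k,n}(h)\to 0$ as $n\to\infty$ for every $k\in\N$.

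The key observation is the standard $\Pi^0_3$ representation of a ``limit is zero'' condition:
\[
\{h:\psi_{k,n}(h)\to 0\}=\bigcap_{m\in\N^+}\bigcup_{N\in\N^+}\bigcap_{n\ge N}\left\{h:|\psi_{k,n}(h)|\le \tfrac{1}{m}\right\}.
\]
Each innermost set is closed by continuity of $\psi_{k,n}$; intersecting over $n\ge N$ keeps it closed; then $\bigcup_N$ gives an $F_\sigma$-set; finally $\bigcap_m$ gives an $F_{\sigma\delta}$-set. Intersecting over the countably many characters $\chi_k$ gives
\[
\mathbb{U}({S,K})=\bigcap_{k\in\N}\{h:\psi_{k,n}(h)\to 0\},
\]
and a countable intersection of $F_{\sigma\delta}$-sets is again $F_{\sigma\delta}$ (by reindexing the double countable intersections of $F_\sigma$-sets), which finishes the proof.

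The only mildly nontrivial point is confirming that $\du{K}$ is countable for compact metric $K$; this is standard (metrizability of a compact abelian group is equivalent to second countability, which is in turn equivalent to countability of the discrete dual), so the proof should go through with no real obstacle beyond bookkeeping of the Borel class.
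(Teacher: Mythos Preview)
Your proposal is correct and matches the paper's own proof essentially line by line: for (i) the paper also intersects the sets $P_{S,O}$ over a countable base (taking care that $\emptyset\notin\mathscr{B}$, which you should mention so the equivalence with density is literally true), and for (ii) the paper writes down precisely the same $F_{\sigma\delta}$ decomposition $\bigcap_k\bigcap_m\bigcup_j\bigcap_{n\ge j}F_{k,m,n}$ using the Weyl criterion and the countability of $\du{K}$.
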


\begin{proof} 
(i) Fix a countable base $\mathscr{B}$ of  $K$ such that $\emptyset\not\in \mathscr{B}$.  For every $O\in \mathscr{B}$,  the set $P_{S,O}=\{\pi\in \Hom(G,K): \pi(S)\cap O\not=\emptyset\}$ is open in  $\Hom(G,K)$ by Lemma \ref{openess:of:sets} (applied to $G$ instead of $H$). Since $\mathbb{D}({S,K})=\bigcap \{P_{S,O}: O\in\mathscr{B}\}$, 
it follows that $\mathbb{D}({S,K})$ is a $G_{\delta}$-set in $\Hom(G,K)$. 

(ii) Since $K$ is a compact metric group, the dual group $\du{K}$ is countable. Let $\{\chi_k:k\in\N\}$ be an enumeration of all non-trivial continuous $\mathbb{S}$-characters of $K$. For $k\in\N$ and $m,n\in\N^+$, 
$$
F_{k,m,n}=\left\{\pi\in \Hom(G,K):  \left|\frac{1}{n}\sum_{i=1}^{n}\chi_k(\pi(a_i))\right|\leq  \frac{1}{m}\right\}
$$
is a closed subset of  $\Hom(G,K)$, and so $\bigcap_{n=j}^\infty F_{k,m,n}$ is also a closed subset of  $\Hom(G,K)$ for every  $j\in \N^+$. Combining this with Weyl's criterion, we conclude that
$$
\mathbb{U}({S,K})=\bigcap_{k\in\N}\bigcap_{m\in\N^+}\bigcup_{j\in \N^+}\left(\bigcap_{n=j}^\infty F_{k,m,n}\right)
$$
is an $F_{\sigma\delta}$-subset of $\Hom(G,K)$. 
\end{proof}

\begin{cor}
For every compact metric abelian group $K$ and each strictly increasing sequence $S=\{a_n:n\in\N\}$ of integers,  the set $\mathrm{Weyl}({S,K})$ is an $F_{\sigma\delta}$-set in $K$.
\end{cor}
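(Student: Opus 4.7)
The plan is to reduce this corollary directly to Proposition~\ref{G:delta-sets}(ii) via the natural identification of $\Hom(\Z,K)$ with $K$ that was explicitly described in the discussion following Definition~\ref{definition:U:D}.

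First, I would specialize Proposition~\ref{G:delta-sets}(ii) to the case $G=\Z$ with the one-to-one sequence $S=\{a_n:n\in\N\}$, which is legitimate since $S$ is strictly increasing and hence one-to-one. This immediately gives that $\mathbb{U}({S,K})$ is an $F_{\sigma\delta}$-subset of $\Hom(\Z,K)$.

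Next, I would invoke the map $\theta:\Hom(\Z,K)\to K$ defined by $\theta(h)=h(1)$, which was already noted in the paper to be a topological isomorphism satisfying $\theta(\mathbb{U}({S,K}))=\mathrm{Weyl}({S,K})$. Since $\theta$ is a homeomorphism, it preserves every class in the Borel hierarchy; in particular, the image of an $F_{\sigma\delta}$-set under $\theta$ is again $F_{\sigma\delta}$. Therefore $\mathrm{Weyl}({S,K})=\theta(\mathbb{U}({S,K}))$ is an $F_{\sigma\delta}$-subset of $K$, as required.

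There is really no substantive obstacle here: once Proposition~\ref{G:delta-sets}(ii) is in hand and the identification $\Hom(\Z,K)\cong K$ is recalled, the corollary is a one-line transfer. The only conceptual point worth flagging explicitly in the write-up is that $\theta$ being a topological isomorphism (not merely a continuous bijection) is what allows us to carry the $F_{\sigma\delta}$ classification from $\Hom(\Z,K)$ to $K$ in both directions.
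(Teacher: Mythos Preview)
Your proposal is correct and follows exactly the paper's approach: the paper's proof simply says it follows from Proposition~\ref{G:delta-sets}(ii) and the observation made after Definition~\ref{definition:U:D}, which is precisely the identification $\theta:\Hom(\Z,K)\to K$ you invoke.
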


\begin{proof}
Follows from Proposition \ref {G:delta-sets}(ii) and the observation made after Definition \ref{definition:U:D}.
\end{proof}

The authors do not know of any example of a strictly increasing sequence $S=\{a_n:n\in\N\}$ of integers and a compact metric group $K$ for which $\mathrm{Weyl}({S,K})$ (and thus, $\mathbb{U}({S,K})$ as well) is not a $G_\delta$-set. Of course, the case $K=\T$ is the most interesting here.

Item (iii) of our next remark
shows that  Proposition \ref {G:delta-sets}(i) does not hold for a non-metric group $K$.

\begin{remark}
\label{non:G-delta}
\begin{itemize}
\item[(i)] 
{\em Let $E$ be a subset of an abelian group $G$, $\kappa$ an uncountable
cardinal and $n\in\N\setminus\{1\}$. Then the set $\Pi(E,\kappa,n)=\{\pi\in\Hom(G,\T^\kappa): \pi(E)$ is dense in  $\T[n]^\kappa\}$
does not contain any non-empty $G_\delta$-subset $B$ of $\Hom(G,\T^\kappa)$.\/} Indeed, given such a $B$, there exist a countable subset $Y$ of $\kappa$
and $\pi=\{\pi_\alpha\}_{\alpha< \kappa}\in B$ such that $\pi_\alpha=0$ for all $\alpha\in \kappa\setminus Y$.  Let $\beta\in \kappa\setminus Y$ be arbitrary. Since $n\neq 1$, $\T[n]\not=\{0\}$. Since $\pi_{\beta}(E)=\{0\}$, the set $\pi_{\beta}(E)$ is not dense in $\T[n]$. Therefore, $\pi(E)$ cannot be dense in $\T[n]^\kappa$.
Hence, $\pi\in B\setminus \Pi(E,\kappa,n)$. 

\item[(ii)]
{\em The family  $\Sigma_{G,\E}$ from 
Theorem \ref{main:result} does not contain any non-empty $G_\delta$-subset of $\du{G}^\cont$\/}. Indeed, since $\E\not=\emptyset$, we can choose some $E\in\E\subseteq \TT(G)$. Then $\n_E\neq 1$ by Remark \ref{Background}(i). Finally, note that $\Sigma_{G,\E}\subseteq \Pi(E,\cont,\n_E)$ and apply item (i).

\item[(iii)]
{\em If $S$ is a subset of an abelian group $G$ and $X$ is an uncountable set, then $\mathbb{D}(S,\T^X)$ does not contain any non-empty $G_\delta$-subset of $\du{G}^X$\/}.
Indeed, observe that  $\mathbb{D}(S,\T^X) =\Pi(S,|X|,0)$ and apply item (i).
\end{itemize}
\end{remark}


\begin{thebibliography}{99}

\bibitem{BMR} G.~Baumslag, A.~Myasnikov, 
V.~Remeslennikov, 
Algebraic geometry over groups, I. Algebraic sets and ideal theory,
J. Algebra 219 (1999) 16--79. 

\bibitem{B}
H.~Bohr, 
Collected Mathematical Works,
E.~F\o elner and B.~Jessen, eds.
Dansk Matemetitisk Forsening, 1952. 

\bibitem{Bryant} R.~Bryant,  
The verbal topology of a group,
J. Algebra 48 (1977) 340--346.

\bibitem{CSa} 
W.~Comfort, V.~Saks, 
Countably compact groups and finest totally bounded topologies,
Pacific J. Math. 49 (1973) 33--44.

\bibitem{Trigos} W.~Comfort, J.~Trigos-Arrieta, 
Remarks on a theorem of Glicksberg,
in:
General topology and applications (Staten Island, NY, 1989), Lecture Notes in Pure and Appl. Math., 134, Dekker, New York, 1991, pp.~25--33.

\bibitem{DS-Memoirs}
D.~Dikranjan, D.~Shakhmatov,
Algebraic structure of  pseudocompact groups,
Memoirs Amer. Math. Soc., 133/633 (1998), 83 pp.

\bibitem{DS}
D.~Dikranjan, D.~Shakhmatov,
Forcing hereditarily separable compact-like group topologies on Abelian groups,
Topology Appl. 151  (2005) 
2--54. 

\bibitem{DS_HMP} D.~Dikranjan, D.~Shakhmatov, 
Hewitt-Marczewski-Pondiczery type theorem for abelian groups and Markov's potential density,
Proc. Amer. Math. Soc. 138 (2010) 2979--2990.

\bibitem{DS_JGT} D.~Dikranjan, D.~Shakhmatov, 
Reflection principle characterizing groups in which unconditionally closed sets are algebraic,
 J. Group Theory 11 (2008) 421--442.

\bibitem{DS_OPIT} D.~Dikranjan, D.~Shakhmatov, 
Selected topics from the structure theory of topological groups,
in: Open Problems in Topology II (E.~Pearl, ed.), Elsevier, 2007, pp. 389--406. 

\bibitem{DS-MZ}
D.~Dikranjan, D.~Shakhmatov, 
The Markov-Zariski topology of an abelian group,
J. Algebra 324 (2010) 1125--1158.

\bibitem{DT}
D.~Dikranjan, M.~Tkachenko, 
Algebraic structure of small countably compact Abelian groups,
Forum Math. 15 (2003) 811--837.

\bibitem{DT0} D.~Dikranjan, M.~Tkachenko, 
Weakly complete free topological groups,
Topology Appl. 112 (2001) 259--287.

\bibitem{vD} 
E.\,K.~van~Douwen, 
The maximal totally bounded group topology on $G$ and the biggest minimal $G$-space for Abelian groups $G$,
Topology Appl. 34 (1990) 69--91.

\bibitem{Eng} R.~Engel'king, 
General topology
(Second edition), Sigma Series in Pure Mathematics, 6, Heldermann Verlag, Berlin, 1989. 

\bibitem{Flor} 
P.~Flor, 
Zur Bohr-Konvergenz von Folgen,
Math. Scand. 23 (1968) 169--170.

\bibitem{Fu} 
L.~Fuchs, 
Infinite Abelian groups,
Vol. I, Academic Press, New York, 1970.

\bibitem{Glicksberg} 
I.~Glicksberg, 
Uniform boundedness for groups,
Canad. J. Math. 14 (1962) 269--276.

\bibitem{Hewitt}
E.~Hewitt, 
Rings of real-valued continuous functions I,
Trans. Amer. Math. Soc. 64 (1948) 45--99. 

\bibitem{Mar1} A.\,A.~Markov, 
On unconditionally closed sets,
Comptes Rendus Dokl. AN SSSR (N.S.) 44 (1944) 180--181 (in Russian).

\bibitem{Mar} A.\,A.~Markov, 
On unconditionally closed sets,
Mat. Sbornik 18 (1946), 3--28 (in Russian). English translation in:
Markov,~A.\,A. Three papers on topological groups:~I. On the existence of periodic connected topological groups,~II. On free topological groups,~III. On unconditionally closed sets. 
Amer. Math. Soc. Translation 1950, (1950). no.~30, 120~pp.;
yet another English translation in:
Topology and Topological Algebra, Translations Series~1, vol.~8,
pp.~273--304. American Math. Society, 1962.

\bibitem{Pospishil}
B.~Posp\'{\i}\v sil, 
Sur la puissance d'un espace contenant une partie dense de puissance
donn\'ee,
\v{C}asopis Pro P\v{e}stov\'an\'{\i} Matematiky a Fysiky 67 (1937) 89--96.

\bibitem{TY}
M.~Tkachenko, I.~Yaschenko,
Independent group topologies on Abelian groups,
Topology Appl. 122 (2002) 425--451.

\bibitem{Weil} 
A.~Weil, 
Sur les espace \' Structure Uniforme et sur la Topologie G\'en\' erale,
Hermann et Cie Paris, Publ. Math. Institute. Strasburg, 1937. 

\bibitem{W} H.~Weyl, 
\" Uber die Gleichverteilung von Zahlen mod. Eins,
Math. Ann. 77, no. 3 (1916) 313--352. 
\end{thebibliography}
\end{document}